\newtheorem{thm}{Theorem}[section]
\newtheorem{cor}[thm]{Corollary}
\newtheorem{prop}[thm]{Proposition}
\newtheorem{lem}[thm]{Lemma}
\theoremstyle{definition}
\newtheorem{defn}[thm]{Definition}
\newtheorem{question}[thm]{Question}
\theoremstyle{remark}
\newtheorem{rem}[thm]{Remark}
\newcommand{\Q}{\mathbb{Q}}
\newcommand{\Z}{\mathbb{Z}}
\newcommand{\C}{\mathbb{C}}
\DeclareMathOperator{\End}{End}
    \newtheoremstyle{TheoremNum}
        {7pt}{7pt}              
        {\itshape}                      
        {}                              
        {\bfseries}                     
        {.}                             
        { }                             
        {\thmname{#1}\thmnote{ \bfseries #3}}
    \theoremstyle{TheoremNum}
\begin{document}
\title{Monodromy of Kodaira Fibrations of Genus $3$}
\author{Laure Flapan}
\address{Department of Mathematics, Massachusetts Institute of Technology, Cambridge, MA 02142}
\email{lflapan@mit.edu}

\subjclass[2010]{14D05, 14D07, 14C30, 11G15, 14H10}
\keywords{Kodaira fibration, variation of Hodge structures, monodromy group}

\begin{abstract}
A Kodaira fibration is a non-isotrivial fibration $f\colon S\rightarrow B$ from a smooth algebraic surface $S$ to a smooth algebraic curve $B$ such that all fibers are smooth algebraic curves of genus $g$. Such fibrations arise as complete curves inside the moduli space $\mathcal{M}_g$ of genus $g$ algebraic curves. We investigate here the possible connected monodromy groups of a Kodaira fibration in the case $g=3$ and classify which such groups can arise from a Kodaira fibration obtained as a general complete intersection curve inside a subvariety of $\mathcal{M}_3$ parametrizing curves whose Jacobians have extra endomorphisms.
\end{abstract}
\maketitle

\section{Introduction}\label{intro}
A Kodaira fibration is a non-isotrivial fibration $f\colon S\rightarrow B$ from a smooth algebraic surface $S$ to a smooth algebraic curve $B$ such that for every $b\in B$, the fiber $F_b:=f^{-1}(b)$ is also a smooth algebraic curve.  The non-isotriviality of the fibration, meaning the property that not all fibers $F_b$ are isomorphic as algebraic varieties, ensures that the fundamental group $\pi_1(B)$ does not act trivially on the fibers. 

Such fibrations were originally constructed by Kodaira \cite{kodsur} as a way to show that, unlike the topological Euler characteristic, the signature $\sigma$ of a manifold, meaning the signature of the intersection form on the middle homology of the manifold, is not multiplicative for fiber bundles. More precisely, for any fiber bundle $\phi\colon X \rightarrow Y$ with fiber $F_y$, the topological Euler characteristic $\chi_{\mathrm{top}}$ satisfies $\chi_{\mathrm{top}}(X)=\chi_{\mathrm{top}}(Y)\chi_{\mathrm{top}}(F_y).$ Prior to Kodaira's construction, Chern-Hirzebruch-Serre \cite{chs} had shown that if $\pi_1(Y)$ acts trivially on the fiber $F_y$, then the signature also satisfies $\sigma(X)=\sigma(Y)\sigma(F)$. Kodaira's construction proved that this hypothesis about the fundamental group was necessary, since for any Kodaira fibration $f\colon S\rightarrow B$, the surface $S$ has signature $\sigma(S)>0$, while the algebraic curves $F_b$ and $B$ have antisymmetric intersection form and thus satisfy $\sigma(F_b)=\sigma(B)=0$. 

Any Kodaira fibration  $f\colon S\rightarrow B$ induces a short exact sequence of fundamental groups
\begin{equation}\label{fundgps}
1\rightarrow \pi_1(F_b)\rightarrow \pi_1(S)\rightarrow \pi_1(B)\rightarrow 1.
\end{equation}

In fact, Kotschick shows in \cite[Proposition 1]{kotschick} that any compact complex surface $S$ whose fundamental group fits into an exact sequence of the form \eqref{fundgps} satisfies $\chi_{\mathrm{top}}(S)=\chi_{\mathrm{top}}(B)\chi_{\mathrm{top}}(F_b)$ if and only if the sequence \eqref{fundgps} is induced by a Kodaira fibration $S\rightarrow B$. It is thus natural to ask: 
\begin{question}\label{fund groups question}
For which extensions 
\[1\rightarrow \pi_1(F_b)\rightarrow G\rightarrow \pi_1(B)\rightarrow 1\]
 as in \eqref{fundgps}, is the group $G$ the fundamental group of a Kodaira surface?
 \end{question}
 
Note that Kodaira in \cite{kodsur} together with Kas in \cite{kas} show that a Kodaira fibration must have base curve $B$ of genus at least $2$ and have fibers $F_b$ of genus at least $3$. The short exact sequence (\ref{fundgps}) induces a homomorphism
\[\rho: \pi_1(B)\rightarrow \mathrm{Mod}(F_b)\subset O(\pi_1(F_b))\]
into the mapping class group of $F_b$. Because the genus $g$ of $F_b$ is at least $3$, the center $Z(\pi_1(F_b))$ is trivial \cite{FM}. Hence, since extensions with outer action  $\rho$ are parametrized by $H^2(\pi_1(B),  Z(\pi_1(F_b)))$, it follows that the homomorphism $\rho$ completely determines the extension (\ref{fundgps}) \cite[Corollary 6.8]{brown}.

Letting $V=H_1(F_b,\mathbb{Z})\otimes \mathbb{Q}$, the homomorphism $\rho$ induces a homomorphism
\[\overline{\rho}: \pi_1(B)\rightarrow GL(V).\]

Observe that the Kodaira fibration $f\colon S\rightarrow B$ determines a map $B\rightarrow \mathcal{M}_{g}$ to the moduli space of curves of genus $g$, sending a point $b\in B$ to the curve $F_b$. Since, by assumption, the fibration $f$ is non-isotrivial, the map $B\rightarrow \mathcal{M}_{g}$ has $1$-dimensional image. It follows from the Torelli theorem \cite[Chapter IV.3]{arbarello} that the induced map to the moduli space of principally polarized abelian varieties $B\rightarrow \mathcal{A}_{g}$, sending $b\in B$ to $H_1(F_b,\mathbb{Z})$, has one-dimensional image. 

Consider the variation of $\Q$-Hodge structures $R_1f_*\mathbb{Q}$. We say that a variation of Hodge structures $\mathbb{V}$ over a connected algebraic variety $Y$ is \emph{isotrivial} if there is some base change $Y'$ of $Y$ on which the induced variation of Hodge structures is constant. Schmid's rigidity theorem \cite[Theorem 7.24]{schmid} implies that if the monodromy representation $\overline{\rho}$ had finite image, then $R_1f_*\mathbb{Q}$ would be isotrivial. However since the map $B\rightarrow \mathcal{A}_{g}$ has one-dimensional image, the variation of of Hodge structures $R_1f_*\mathbb{Q}$ is not isotrivial. Therefore, the representation $\overline{\rho}$, and consequently the homomorphism $\rho$ as well, must have infinite image. 

To better understand this image, note that any variation of $\Q$-Hodge structures $\mathbb{V}$ over a connected algebraic variety $Y$ will yield a monodromy respresentation $\Phi\colon \pi_1(Y,y)\rightarrow GL(V_y)$ for $y\in Y$. 

\begin{defn} The \emph{connected monodromy group} $T$ of the variation $\mathbb{V}$ is the connected component of the identity of the smallest $\mathbb{Q}$-algebraic subgroup of $GL(V_y)$ containing the image of $ \pi_1(Y,y)$.
\end{defn}

Thus one way to approximate Question \ref{fund groups question} about which groups can arise as the fundamental group of a Kodaira surface is to ask: 
\begin{question}\label{monodromy question}
Which groups can arise as the connected monodromy group of a Kodaira fibration?
\end{question}

This question was first studied in \cite[Section 5.2]{arapura} as part of a larger study of restrictions on possible connected monodromy groups of fibered projective varieties. In that paper, from which this article draws many ideas, Arapura proves, for instance,  that the connected monodromy group of a Kodaira fibration must always be a non-trivial semisimple group of symplectic Hermitian type \cite[Lemma 5.5]{arapura}.

The main result of this article, Theorem \ref{main theorem}, provides an answer to Question \ref{monodromy question} in the case of a Kodaira fibration whose fibers have genus $3$. The theorem gives precise characterizations of the possible connected monodromy groups as $\Q$-algebraic groups and their representations. After base changing to $\mathbb{C}$, Theorem \ref{main theorem} yields

\begin{thm}\label{complex theorem}
The connected monodromy group of a genus $3$ Kodaira fibration must be isomorphic over $\C$ to one of the following groups:
\begin{enumerate}
\item
\begin{enumerate}
\item \label{t1}$Sp(6)$
\item \label{t2}$SL(2)\times SL(2)\times SL(2) $
\item \label{t3}$SU(3) $
\end{enumerate}
\item 
\begin{enumerate}
\item \label{t4}$Sp(4)$
\item \label{t5}$SL(2)\times SL(2)$
\end{enumerate}
\end{enumerate}
Moreover, the groups \eqref{t1}, \eqref{t2}, \eqref{t4} all arise from  Kodaira fibrations obtained as general complete intersection curves in a subvariety of $\mathcal{A}_3$ whose points all have endomorphisms by a specified ring.  By contrast,  the groups \eqref{t3} and \eqref{t5} are not known to actually occur. 
\end{thm}

Theorem \ref{main theorem} thus shows that while the options for admissible extensions in \eqref{fundgps} are quite limited, already in genus $3$ there are Kodaira fibrations that arise from extensions beyond the most general one. Phrased differently, there exist genus $3$ Kodaira fibrations $f\colon S\rightarrow B$, corresponding to complete curves in $\mathcal{M}_3$, such that the Jacobian of the fibers comes equipped with additional endomorphisms beyond just $\Z$. This stands in contrast with Gonz\'{a}lez-D{\'{\i}}ez and Harvey's result in \cite[Theorem 2]{gonzalez} that there are no complete curves inside the locus of $\mathcal{M}_3$ parametrizing curves with extra automorphisms. Moreover, the only known explicit examples of complete curves in $\mathcal{M}_3$ are due to a construction of Zaal, who proves that the Jacobian of the curve corresponding to a very general fiber in his examples has no extra endomorphisms \cite{zaal}.

The main strategy of the proof of Theorem \ref{main theorem} is to study the possible endomorphism algebras of the $\Q$-Hodge structure $V=H_1(F_b,\Q)$, for $F_b$ a very general fiber of a genus $3$ Kodaira fibration. Recall here that a $\Q$-Hodge structure $W$ of weight $w$ is a $\Q$-vector space equipped with a decomposition of $\C$-vector spaces $W\otimes_\Q\C=\bigoplus_{p+q=w}W^{p,q}$ such that the complex conjugate $\overline{W}^{p,q}$ is equal to $W^{q,p}$. Thus $V$ is a $\Q$-Hodge structure of weight $-1$ in that it has decomposition $V\otimes_\Q\C=V^{-1,0}\oplus V^{0,-1}$, where $V^{p,q}\cong H^{-q}(F_b,\Omega_{F_b}^{-p})$ and $\Omega_{F_b}^{-p}$ denotes the sheaf of holomorphic $-p$-forms on the curve $F_b$. A polarization of a $\Q$-Hodge structure $W$ of weight $w$ is an alternating (respectively symmetric) form $\langle,\rangle\colon W\times W\rightarrow \Q$  if $w$ is odd (respectively even) such that $\langle W^{p,q},W^{p',q'}\rangle =0$ if $p'\ne w-p$ and $i^{p-q}(-1)^{\frac{w(w-1)}{2}}\langle x,\overline{x}\rangle>0$  for all nonzero $x\in W^{p,q}$. Thus in particular, the $\Q$-Hodge structure $V$ comes equipped with a canonical polarization corresponding to the intersection form.

Throughout the paper, we make use of the equivalence of categories, discussed in greater detail in Section \ref{background}, between the category of complex abelian varieties up to isogeny and that of polarizable $\Q$-Hodge structures of type $\{(-1,0), (0,-1)\}$, meaning $\Q$-Hodge structures $W$ with decomposition $W\otimes_\Q\C=W^{-1,0}\oplus W^{0,-1}$.  This equivalence allows us to go back and forth between the $\Q$-Hodge structure $V=H_1(F_b,\Q)$ and the Jacobian, considered up to isogeny, of a very general fiber $F_b$ of a Kodaira fibration $f\colon S\rightarrow B$. 

Determining the possible endomorphism algebras of $V=H_1(F_b,\Q)$ thus amounts to studying subvarieties of $\mathcal{A}_3$ parametrizing abelian varieties with fixed endomorphisms and determining which of these may contain a complete curve corresponding to a family of Jacobians of smooth curves. This problem is much more tractable in genus $3$, the minimal possible genus for a Kodaira fibration, than in higher genus due to the Schottky problem. Namely, in general, it is very difficult to describe the locus of abelian varieties of dimension $g$ corresponding to Jacobians of smooth genus $g$ curves. In dimension $3$ however, the only elements of $\mathcal{A}_3$ which are not Jacobians of smooth genus $3$ curves are those lying in the decomposable locus $\mathcal{A}_3^{\mathrm{dec}}$ consisting of principally polarized abelian varieties that can be written as a product of smaller dimensional principally polarized abelian varieties.

The classification of the possible endomorphism algebras of $V=H_1(F_b,\Q)$  in the case of a genus $3$ Kodaira fibration is accomplished in Propositions \ref{simple case} and \ref{prop non-simple case}. Crucial is the classification by Albert \cite{albert} and Shimura \cite{shimura} of the possible endomorphism algebras of a $g$-dimensional abelian variety.  Proposition \ref{simple case} treats the case where the Hodge structure $V=H_1(F_b,\Q)$ is simple, in the sense that it cannot be written as a sum of smaller-dimensional $\Q$-Hodge structures. Proposition \ref{prop non-simple case} treats the more complicated case that $V=H_1(F_b,\Q)$ is non-simple. In this case, since $V$ is non-simple, its corresponding abelian variety is isogenous to a point in $\mathcal{A}_3^{\mathrm{dec}}$. However, since this abelian variety must correspond to the Jacobian of a smooth curve, it cannot be isomorphic to a point in $\mathcal{A}_3^{\mathrm{dec}}$. Thus to address the non-simple case, a careful study of isogenies of $3$-dimensional abelian varieties is required (see Sections \ref{hecke section} and \ref{decomposable isogeny section}). Roughly speaking, we use the machinery of Hecke correspondences to determine when the relevant locus of abelian varieties with given endomorphism algebra may be moved away from the decomposable locus in sufficiently high codimension in order to show there is a complete curve corresponding to a Kodaira fibration contained in this locus. 

Once the classification of possible endomorphism algebras is accomplished, the result of Theorem \ref{main theorem} follows directly using results of Andr\'{e} \cite{andre} and Ribet \cite{ribet1} about Hodge groups, which we explain below. The \textit{Hodge group} $Hg(W)$ of a polarizable $\Q$-Hodge structure $W$ (also called the \textit{special Mumford-Tate group}) is the connected algebraic $\Q$-subgroup of $SL(W)$ whose invariants for any $m,n\in \Z_{\ge0}$ in 
\[\mathcal{T}^{m,n}\coloneqq W^{\otimes m} \otimes (W^*)^{\otimes n}\]
are the Hodge cycles, meaning they have type $p=q$ in the induced Hodge decomposition of $\mathcal{T}^{m,n}$. Andr\'{e}'s theorem (originally stated for mixed $\Q$-Hodge structures, but which we state here only for variations of pure $\Q$-Hodge structures) yields:
\begin{thm}(Andr\'{e} \cite[Theorem 1]{andre})\label{andrethm} Let $\mathbb{V}$ be a polarizable variation of $\Q$-Hodge structures over a smooth connected algebraic variety $Y$. Then for very general $y\in Y$, the connected monodromy group $T$ of $\mathbb{V}$ satisfies
$T\triangleleft [Hg(V_y),Hg(V_y)].$
\end{thm}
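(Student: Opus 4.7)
I would prove the theorem in four stages: isolate a Hodge-generic locus on which the VHS behaves uniformly; show $T\subset Hg(V_y)$ via a Tannakian argument on tensor invariants; prove that $T$ is normal in $Hg(V_y)$; and deduce from semisimplicity of $T$ that it lies in the derived subgroup. The first stage rests on Deligne's observation that there is a Hodge-generic locus $Y^{\circ}\subset Y$ --- the complement of a countable union of proper closed analytic subvarieties --- on which the groups $Hg(V_y)$ are all identified under parallel transport, and every Hodge class at any $y\in Y^{\circ}$ in every $\mathcal{T}^{m,n}(V_y)$ extends to a flat global section of the corresponding tensor local system. Throughout what follows a ``very general'' point will mean $y\in Y^{\circ}$.

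For the containment $T\subset Hg(V_y)$ I would apply a Chevalley/Tannakian argument. By construction $Hg(V_y)$ is the connected subgroup of $GL(V_y)$ cut out by fixing every Hodge class in every $\mathcal{T}^{m,n}(V_y)$, and a connected algebraic subgroup of $GL(V_y)$ is determined by its collection of fixed tensors; so it suffices to verify that every Hodge class at $y$ is $T$-invariant. For $y\in Y^{\circ}$ each such Hodge class extends, by the previous step, to a flat global section of the appropriate tensor local system, and such sections are manifestly invariant under monodromy, hence under its connected Zariski closure $T$.

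Normality of $T$ in $Hg(V_y)$ is the step I expect to be the main obstacle. My approach would combine Deligne's theorem of the fixed part --- which asserts that a monodromy-invariant flat section of a tensor power is automatically a morphism of Hodge structures at every fiber --- with a deformation argument: conjugation of $T$ by $g\in Hg(V_y)$ preserves the property of fixing all Hodge tensors of $V_y$ (since $g$ itself fixes them tautologically), and the conjugate subgroup can then be shown to arise, up to connected components, as the monodromy image associated with a deformed Hodge filtration on the same local system. The delicate point is matching up the Hodge-tensor data under conjugation with the genuinely topological invariants coming from $\pi_1(Y,y)$, which is where Deligne's fixed-part theorem is essential.

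Finally, to upgrade $T\subset Hg(V_y)$ to $T\triangleleft [Hg(V_y),Hg(V_y)]$, I would invoke Deligne's semisimplicity theorem (applicable because $\mathbb{V}$ is polarizable) to conclude that $T$ is a connected semisimple algebraic $\Q$-group. Since $Hg(V_y)$ is reductive, its abelianization $Hg(V_y)/[Hg(V_y),Hg(V_y)]$ is a torus, and every homomorphism from a connected semisimple group to a torus is trivial. Hence the projection of $T$ to this torus vanishes, forcing $T\subset [Hg(V_y),Hg(V_y)]$. Normality of $T$ in $Hg(V_y)$ from the previous stage then immediately restricts to normality in the subgroup $[Hg(V_y),Hg(V_y)]$, yielding the desired conclusion.
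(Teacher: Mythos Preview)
The paper does not supply its own proof of this statement: Theorem~\ref{andrethm} is quoted from Andr\'e \cite[Theorem~1]{andre} and used as a black box in the proof of Theorem~\ref{main theorem}. So there is no in-paper argument to compare against; the relevant benchmark is Andr\'e's original proof.

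Your four-stage outline matches the standard architecture of that proof, and stages one, two, and four are correctly described. The one place where your sketch is imprecise is the normality step. The argument you describe --- conjugating $T$ by $g\in Hg(V_y)$ and realizing the conjugate as monodromy of a ``deformed Hodge filtration'' --- is not how normality is usually established and, as written, it is not clear it can be made to work: deforming the Hodge filtration on a fixed local system does not change the monodromy representation at all, so it is hard to see how $gTg^{-1}$ would arise that way. The clean argument is Lie-algebraic: $\mathrm{Lie}(T)\subset \mathrm{End}(V_y)$ is invariant under the adjoint action of the monodromy (tautologically, since $T$ is the identity component of its Zariski closure), hence by Deligne's theorem of the fixed part $\mathrm{Lie}(T)$ is a sub-$\Q$-Hodge structure of $\mathrm{End}(V_y)$. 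At a Hodge-generic point, sub-Hodge structures of any tensor construction on $V_y$ are precisely the $Hg(V_y)$-submodules, so $\mathrm{Lie}(T)$ is stable under $\mathrm{Ad}(Hg(V_y))$, which is exactly normality of $T$ in $Hg(V_y)$. Once you replace your deformation paragraph with this argument, the proposal is complete and agrees with Andr\'e's approach.
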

Additionally, Ribet shows in  \cite{ribet1} that the Hodge group of a simple abelian variety $A$ of prime dimension is completely determined by $\End_\Q(A)$ and his calculations of $Hg(A)$ yield that $[Hg(A),Hg(A)]$ is always a simple group. The two results together can thus be used to completely determine the possible connected monodromy groups $T$ of a genus $3$ Kodaira fibration once the possible endomorphism algebras of $V=H_1(F_b,\Q)$ are known, which then yields Theorem \ref{main theorem}.

The organization of the paper is as follows. In Section \ref{general kodaira section} we detail the construction of Kodaira fibrations from general complete intersection curves. Section \ref{background} then gives background on $\Q$-Hodge structures of type $\{(-1,0),(0,-1)\}$ and their endomorphism algebras. In Section \ref{section restriction} we give some initial restrictions on the $\Q$-Hodge structure $V=H_1(F_b,\Q)$ on a very general fiber $F_b$ of a Kodaira fibration of arbitrary genus. Section \ref{section simple case} then characterizes the possible endomorphism algebras of $V$ for a genus $3$ Kodaira fibration in the case when $V$ is simple. Section \ref{hecke section} introduces Hecke correspondences and polarized isogenies. Sectin \ref{decomposable isogeny section} then makes use of the machinery of Section \ref{hecke section} to study some sub-loci of the decomposable locus $\mathcal{A}_3^{\mathrm{dec}}$ and their behavior under Hecke translation. Section \ref{non-simple section} then analyzes the possible endomorphism algebras of $V$ when $V$ is not simple.  Lastly, Section \ref{section main result} presents the proof of Theorem \ref{main theorem}.

\section{General Complete Intersection Kodaira Fibrations}\label{general kodaira section}
Let $\mathcal{M}_{g}[n]$ be the fine moduli space of curves of genus $g$ with fixed level $n\ge 3$ structure. Note that the choice of $n$ is to ensure that the moduli space is fine so that a curve in $\mathcal{M}_{g}[n]$ really does parametrize a family of genus $g$ curves. Thus any Kodaira fibration $f\colon S\rightarrow B$ corresponds, up to possible base change, to a complete curve in $\mathcal{M}_{g}[n]$. Let $\mathcal{A}_g[n]$ denote the fine moduli space of principally polarized abelian varieties of genus $g$ with level $n$ structure. The Torelli morphism
\[J\colon \mathcal{M}_g[n]\rightarrow \mathcal{A}_g[n]\]
sends a curve to its Jacobian together with its canonical polarization and level structure. The closure $\mathcal{T}_g[n]$ of the image $J(\mathcal{M}_g[n])$  of $\mathcal{M}_g[n]$ in $\mathcal{A}_g[n]$ is called the \textit{Torelli locus}. 
The boundary $T_g^{\mathrm{dec}}[n]\coloneqq T_g[n]\backslash J(\mathcal{M}_g[n])$ consists of decomposable principally polarized abelian varieties, meaning principally polarized abelian varieties that can be written as the product of two smaller dimensional principally polarized abelian varieties (all with level structure $n$).  

Although the Torelli morphism on the coarse moduli space $\mathcal{M}_g$ of curves of genus $g$ (without specifying a level structure) is an embedding, for $g>2$ the Torelli morphism $J\colon \mathcal{M}_g[n]\rightarrow \mathcal{A}_g[n]$ is ramified at the hyperelliptic locus $\mathcal{H}_g[n]$ of $\mathcal{M}_g[n]$. Outside of $\mathcal{H}_g[n]$, the morphism $J$ is an immersion. The hyperelliptic locus $\mathcal{H}_g[n]$ has codimension in $\mathcal{M}_g[n]$ at least $2$ for $g\ge 4$ and has codimension $1$ for $g=3$. 

Neither of the moduli spaces $\mathcal{M}_{g}[n]$, $\mathcal{A}_g[n]$ is complete, however $\mathcal{A}_g[n]$ comes equipped with a minimal compactification $\mathcal{A}_g[n]^*$ called the Satake-Baily-Borel compactification. The Satake-Baily-Borel compactification $\mathcal{M}_{g}[n]^*$ is defined by taking the closure of $J(\mathcal{M}_{g}[n])$ inside $\mathcal{A}_g[n]^*$. For $g>2$, the boundary $ \mathcal{M}_{g}[n]^*\backslash  \mathcal{M}_{g}[n]$ has codimension $2$ inside $\mathcal{M}_{g}[n]^*$.

The following argument to produce Kodaira fibrations whose fibers have Jacobian lying in some subvariety of $\mathcal{A}_g[n]$ is obtained from a standard argument, which can be found for instance in \cite[Section 1.2.1]{catanese} and which is used to prove the existence of Kodaira fibrations more generally.

\begin{prop}\label{complete intersection kod fib} Suppose $Z$ is a subvariety of $\mathcal{A}_g[n]$, for $n,g\ge 3$, such that
\begin{enumerate}
\item $Z\cap \mathcal{T}_g[n]^{\mathrm{dec}}$ has codimension at least $2$ in $Z\cap \mathcal{T}_g[n]$
\item $(Z\cap \mathcal{T}_g[n])^*$ has boundary $(Z\cap \mathcal{T}_g[n])^*\backslash (Z\cap \mathcal{T}_g[n])$ of codimension at least $2$ in $(Z\cap \mathcal{T}_g[n])^*$.
\end{enumerate}
Then there exists a complete curve $C$ in $Z\cap \mathcal{T}_g[n]$ obtained as an intersection of general ample divisors on $Z\cap \mathcal{T}_g[n]$ such that, after possibly taking a ramified cover, the curve $C$ corresponds to a genus $g$ Kodaira fibration $f\colon S\rightarrow B$. 
\end{prop}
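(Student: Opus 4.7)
The plan is to produce $C$ by a Bertini-style complete intersection argument inside the projective completion of $Z\cap\mathcal{T}_g[n]$, then lift to $\mathcal{M}_g[n]$ and pull back the universal family.

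Write $W=Z\cap \mathcal{T}_g[n]$ and let $W^*$ denote its closure inside the Satake--Baily--Borel compactification $\mathcal{A}_g[n]^*$. Since $W^*$ is projective, I can choose a very ample line bundle and successively cut by $\dim W -1$ general members of the corresponding linear system. Bertini's theorem guarantees that a generic such complete intersection is irreducible and is smooth away from the singular locus of $W^*$, yielding a complete curve $C^*\subset W^*$. The codimension hypotheses are exactly what is needed to force $C^*$ to land in the good open subset: hypothesis (2) says that the boundary $W^*\setminus W$ has codimension $\ge 2$ in $W^*$, so a general complete intersection curve avoids it, giving $C^*\subset W$; hypothesis (1) then says the decomposable intersection $W\cap \mathcal{T}_g[n]^{\mathrm{dec}}$ has codimension $\ge 2$ in $W$, so the curve also avoids that locus. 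Thus $C^*$ is a complete curve lying inside $Z\cap J(\mathcal{M}_g[n])$.

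Next I would lift $C^*$ through the Torelli morphism $J\colon \mathcal{M}_g[n]\to \mathcal{A}_g[n]$. Outside the hyperelliptic locus $J$ is an immersion, so on the non-hyperelliptic part the lift is immediate. If $g\ge 4$, the hyperelliptic locus has codimension $\ge 2$ in $\mathcal{M}_g[n]$ and a sufficiently general $C^*$ can be arranged to miss its image, producing a curve $C\subset \mathcal{M}_g[n]$. In genus $g=3$, the hyperelliptic locus has codimension one and may have to be crossed, and over it $J$ is $2:1$ (ramified at the hyperelliptic involution); normalizing the preimage $J^{-1}(C^*)$ gives an at-worst double cover $B\to C^*$ along which the pullback lifts to $\mathcal{M}_g[n]$. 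This is the "ramified cover" alluded to in the statement.

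Because $\mathcal{M}_g[n]$ for $n\ge 3$ is a fine moduli space, pulling back the universal curve along $B\to \mathcal{M}_g[n]$ produces a smooth family $f\colon S\to B$ whose fibers are smooth genus $g$ curves. Non-isotriviality is automatic: the image of $B$ in $\mathcal{M}_g[n]$ is a curve (not a point), since $C^*$ has positive dimension in $\mathcal{A}_g[n]$ and $J$ restricted to $\mathcal{M}_g[n]\setminus \mathcal{H}_g[n]$ is quasi-finite. Thus $f\colon S\to B$ is a Kodaira fibration in the desired sense. The step I expect to require the most care is the lifting across the hyperelliptic locus in the $g=3$ case, where one must verify that the normalization of the pullback of $C^*$ to $\mathcal{M}_g[n]$ is a smooth connected curve and that the resulting total space $S$ is smooth; the remaining codimension bookkeeping for Bertini is standard.
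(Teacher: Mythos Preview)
Your Bertini step is exactly what the paper does: cut $W^*=(Z\cap\mathcal T_g[n])^*$ by general ample divisors, and use hypotheses (1) and (2) to force the resulting complete curve into $J(\mathcal M_g[n])$ and away from the decomposable locus. So far the arguments coincide.

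The divergence is in how you handle the hyperelliptic locus, and your description there is not quite right. You write that ``over it $J$ is $2{:}1$''; in fact the paper's setup (and the standard Oort--Steenbrink picture) is that $J$ is an \emph{immersion} off $\mathcal H_g[n]$ and \emph{ramified} along $\mathcal H_g[n]$, i.e.\ the differential drops rank in the normal direction. So ``normalizing $J^{-1}(C^*)$'' is not obviously the right operation, and you have not argued that this preimage is complete or connected. You also do not justify why $C$ cannot lie entirely inside $J(\mathcal H_g[n])$; the paper handles this uniformly for all $g\ge 3$ by invoking that $\mathcal H_g[n]$ is affine, hence cannot contain the complete curve $C$ (so your case split $g=3$ versus $g\ge 4$ is unnecessary and, for $g\ge 4$, not actually justified without knowing the codimension of $J(\mathcal H_g[n])\cap Z$ in $Z$).

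The paper's fix is more direct than yours: choose $C$ to meet the hyperelliptic image transversally at finitely many general points $P_1,\dots,P_k$, take any double cover $C'\to C$ branched over a set containing $\{P_1,\dots,P_k\}$, and then use that the Kuranishi family at each $P_i$ maps to $\mathcal M_g[n]$ with simple ramification along the hyperelliptic locus to extend the family of curves from $C'\setminus\{P_i'\}$ across the $P_i'$. This sidesteps any global analysis of $J^{-1}(C^*)$. Your outline would be salvageable if you replaced the ``$J$ is $2{:}1$, normalize the preimage'' step with this local Kuranishi argument.
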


\begin{proof}
Recall that $T_g[n]^{\mathrm{dec}}$ is defined as the boundary $T_g[n]\backslash J(M_g[n])$. Thus if $Z\cap T_g[n]^{\mathrm{dec}}$ has codimension at least $2$ in $Z\cap \mathcal{T}_g[n]$, then a general complete intersection curve in $Z\cap \mathcal{T}_g[n]$, meaning one obtained as an intersection of general ample divisors, will be contained in $J(M_g[n])$. Moreover, since the Satake compactification $(Z\cap \mathcal{T}_g[n])^*$ has boundary $(Z\cap \mathcal{T}_g[n])^*\backslash (Z\cap \mathcal{T}_g[n])$ of codimension at least $2$ inside $(Z\cap \mathcal{T}_g[n])^*$ such a curve $C$ will be complete. So we indeed have a complete curve $C$ lying inside $J(\mathcal{M}_g[n]) \cap Z$. The issue is to show that the curve $C$ actually parametrizes a family of curves in $\mathcal{M}_{g}[n]$.

Now, the hyperelliptic locus $\mathcal{H}_g[n]$ is affine and has codimension $\ge 1$ in $\mathcal{M}_g[n]$ for $g\ge 3$. In particular, since the curve $C$ is complete, it cannot be contained in the hyperelliptic locus in $\mathcal{A}_g$, and consequently neither can $Z$. So we know $J(\mathcal{H}_g[n])\cap Z$ has codimension at least $1$ in $Z$ and that, although the curve $C$ cannot necessarily be chosen to avoid the hyperelliptic locus, it can be chosen to intersect the hyperelliptic locus transversally at general points $P_1,\ldots, P_k$. Since the Torelli map $J$ is not an immersion above these points, we do not have a family of curves over $C$, only over $C-\{P_1,\ldots,P_k\}$.  Now take a double cover $\pi\colon C'\rightarrow C$
branched over a set of points that includes $\{P_1,\ldots, P_k\}$. For each $\le i\le k$, let $P_i'\coloneqq \pi^{-1}(P_i)$. Then the Kuranishi family of each $
P_i$ has a map to $\mathcal{M}_g[n]$ ramified over the hyperelliptic locus. Namely, the family of curves over $C'\backslash \pi^{-1}(\{P_1,\ldots, P_k\})$ extends to a family of genus $g$ curves over all of $C'$.

\end{proof}

\begin{defn}\label{definition general complete intersection} A Kodaira fibration is said to be \textit{general complete intersection} if it is obtained, as in the statement of Proposition \ref{complete intersection kod fib}, from a general complete intersection curve in  $Z\cap \mathcal{T}_g[n]$, for $Z$ a subvariety of $\mathcal{A}_g[n]$.
\end{defn}

\begin{rem}\label{complete intersection remark} Proposition \ref{complete intersection kod fib} shows in particular that there exist general complete intersection Kodaira fibrations for every genus $g\ge 3$ arising as general complete intersection curves in $J(M_g[n])$. The Jacobian of a very general fiber of such a fibration will thus be a general genus $g$ abelian variety, meaning one with endomorphisms only by $\Z$.

More generally, letting $Z$ be the subvariety of $\mathcal{A}_g[n]$ cut out by principally polarized abelian varieties with specified endomorphism ring, Proposition \ref{complete intersection kod fib} gives conditions under which one may show the existence of a Kodaira fibration such that the Jacobian of a very general fiber comes equipped with endomorphisms by this specified ring. 
\end{rem}


\section{Mumford-Tate Domains of $\Q$-Hodge structures with Extra Endomorphisms}\label{background}
Let $V$ be a simple polarized $\mathbb{Q}$-Hodge structure of type $\{(-1,0),(0,-1)\}$ with polarization denoted by $\langle, \rangle$. The category of abelian varieties over $\mathbb{C}$ up to isogeny is equivalent to the category of polarizable $\mathbb{Q}$-Hodge structures of type $\{(-1,0),(0,-1)\}$ via the functor that sends an abelian variety $A$ to its homology $H_1(A,\mathbb{Q})$ \cite[Theorem 2.2]{moonenmt}. This enables us to apply results originally formulated in the language of abelian varieties to $\Q$-Hodge structures of this type. 

The endomorphism algebra $L$ of the Hodge structure $V$ is a division algebra over $\mathbb{Q}$ with an involution $a\to \overline{a}$ for each $a\in L$, given by $\langle ax,y\rangle=\langle x, \overline{a}y\rangle$ for any $x,y \in V$. This involution is called the \emph{Rosati involution}. 

The Rosati involution on $L$ is a \emph{positive involution}, meaning that $L$ is finite-dimensional as a $\Q$-vector space and that the reduced trace $\mathrm{tr}^L_\mathbb{Q}(x\overline{x})$ is positive as an element of $\mathbb{R}$ for all nonzero $x$ in $V$ \cite[Remark 1.20]{moonenmt}. It follows that if $L$ is a field, then $L$ is either a totally real or a CM field and hence the Rosati involution on $L$ just corresponds to complex conjugation.

Now let $F_0$ be the center of $L$ and let $F$ be the subfield of $F_0$ fixed by the Rosati involution. Then Albert's classification of division algebras over a number field that have positive involution \cite[Chapter X, \S11]{albert} yields that $L$ must one of the following four types:
\begin{enumerate}
\item Type I: $L=F$ is totally real

\item Type II: $L$ is a totally indefinite quaternion algebra over the totally real field $F$

\item Type III: $L$ is a totally definite quaternion algebra over  the totally real field $F$

\item Type IV: $L$ is a central simple algebra over the CM field $F_0$
\end{enumerate}

Note that since $V$ is a vector space over the division algebra $L$, the degree of $L$ over $\mathbb{Q}$ must divide the dimension of $V$ as a $\mathbb{Q}$-vector space. Hence if $V$ has dimension $2n$, then we may write $2n=m[L:\mathbb{Q}]$ for some positive integer $m$. 

 Writing $l=[F:\mathbb{Q}]$ and $q^2=[L:F_0]$, we have that $V\otimes _\mathbb{Q}\mathbb{C}=V^{-1,0}\oplus V^{0,-1}$ is free of rank $2n/l$ over $F\otimes_\mathbb{Q}\mathbb{C}=\prod_{\sigma\in \Sigma(F)}\mathbb{C}$, where $\Sigma(F)$ denotes the set of embeddings of $F$ into $\mathbb{C}$. Since the action of $F\otimes_\mathbb{Q}\mathbb{C}$ respects the Hodge decomposition of $V\otimes _\mathbb{Q}\mathbb{C}$, any embedding $\sigma\in \Sigma(F)$ acts with the same multiplicity in $V^{-1,0}$ and $ V^{0,-1}$. Hence $2l$ must divide $2n$, meaning that $l$ divides $n$.   In particular, if $L$ is of Type I, then $[L:\mathbb{Q}]$ divides $n$ and so $m$ is even. 
 
If $L$ is of Type IV, we have $L\otimes _\mathbb{Q}\mathbb{C}\cong (M_q(\mathbb{C}))^{2l}$, so let $\chi_1,\ldots, \chi_l,\overline{\chi}_1,\ldots, \overline{\chi}_l$ denote the simple $L\otimes _\mathbb{Q}\mathbb{C}$-modules, each having dimension $q$. For $1\le \nu\le l$, let $r_\nu$ and $s_\nu$ denote the multiplicities of $\chi_\nu$ and $\overline{\chi}_\nu$ respectively of the representation of $F_0$ on $V^{-1,0}\subset V\otimes_\mathbb{Q}\mathbb{C}$. So then for each $1\le \nu\le l$, we know $r_\nu+s_\nu=mq$.

\subsection{Hermitian Symmetric Domains}\label{Hermitian symmetric}
Following \cite{classperiod}, if $D$ is a Hermitian symmetric domain, let $\mathrm{Hol}(D)$ be the group of automorphisms of $D$ as a complex manifold and let $\mathrm{Hol}(D)^+$ be the connected component of $\mathrm{Hol}(D)$ containing the identity. Then letting $\mathfrak{d}=\mathrm{Lie}(\mathrm{Hol}(D)^+)$, there is a unique connected adjoint real algebraic subgroup $G$ of $GL(\mathfrak{d})$ such that inside of $GL(\mathfrak{d})$  we have $G(\mathbb{R})^+=\mathrm{Hol}(D)^+$, where $G(\mathbb{R})^+$ denotes the connected component of the identity of $G(\mathbb{R})$ with respect to the real topology \cite[Proposition 1.7]{milne}. Moreover, isomorphism classes of irreducible Hermitian symmetric domains are classified by the special nodes of the connected Dynkin diagrams associated to the groups $G_\mathbb{C}$, where a node $i$ is said to be special if its corresponding simple root $\alpha_i$ is such that, in the expression of the highest root $\tilde{\alpha}=\sum_{i\in I}n_i\alpha_i$ of the Dynkin diagram of $G_\mathbb{C}$, the coefficient $n_i$ of $\alpha_i$ is equal to $1$. 

Now consider the following Hermitian symmetric domains:
\begin{equation*}
\begin{aligned}
\mathcal{D}^1_{r,s}&=\{z\in M_{r,s}(\mathbb{C})\mid 1-z^t\overline{z} \text{ is positive Hermitian}\}\\
\mathcal{D}^2_r&=\{z\in M_r(\mathbb{C})\mid z^t=-z \text{ and } 1-z^t\overline{z} \text{ is positive Hermitian}\}\\
\mathcal{D}^3_r&=\{z\in M_r(\mathbb{C})\mid z^t=z \text{ and } \mathrm{Im}(z) \text{ is positive symmetric}\}
\end{aligned}
\end{equation*}
In the classification of Hermitian symmetric domains these correspond to types $I_{r,s}$, $II_r$ and $III_r$ respectively, which have associated Dynkin diagrams of types $A_{r+s-1}$, $D_r$, and $C_r$ respectively.

The Hermitian symmetric domain $\mathcal{D}^3_r$ is the Siegel upper half-space of degree $r$ and parametrizes $2r$-dimensional polarizable $\mathbb{Q}$-Hodge structures of type $\{(-1,0),(0,-1)\}$. Hence suppose that $V\in\mathcal{D}^3_n$ is one such Hodge structure and let $L$ be the endomorphism algebra of $V$. We will call the subdomain of $\mathcal{D}^3_n$ parametrizing $2n$-dimensional $\Q$-Hodge structures of type $\{(-1,0),(0,-1)\}$ with endomorphisms by $L$ the \emph{Mumford-Tate domain} of $V$. A more precise definition and a more detailed discussion of Mumford-Tate domains can be found in \cite{domain}.

Recall the notation for $l$, $m$, $r_i$, and $s_i$ from the beginning of this section. Then, following Shimura \cite[Section 2.6]{shimura}, the connected component through $V$ of the Mumford-Tate domain of $\Q$-Hodge structures of type $\{(-1,0),(0,-1)\}$ with endomorphisms by $L$ are the following products of $l$ factors, depending on the Albert classification of $L$:
\begin{enumerate}

\item Type I: $\mathcal{D}^3_{m/2}\times \cdots \times \mathcal{D}^3_{m/2}$

\item Type II: $\mathcal{D}^3_{m}\times \cdots \times \mathcal{D}^3_{m}$

\item Type III: $\mathcal{D}^2_{m}\times \cdots \times \mathcal{D}^2_{m}$

\item Type IV: $\mathcal{D}^1_{r_1,s_1}\times \cdots \times \mathcal{D}^1_{r_l,s_l},$
\end{enumerate}

The corresponding dimensions $d$ of these domains are thus

\begin{enumerate}
\item Type I: $d=\frac{1}{2}\frac{m}{2}\left(\frac{m}{2}+1\right)l$

\item Type II:  $d=\frac{1}{2}m\left(m+1\right)l$

\item Type III: $d=\frac{1}{2}m\left(m-1\right)l$

\item Type IV: $d=\sum_{\nu=1}^l r_\nu s_\nu.$

\end{enumerate}

\section{Restrictions on possible Kodaira fibrations}\label{section restriction}
Let $f\colon S\rightarrow B$ be a Kodaira fibration with fibers of genus $g$. We use the above discussion of Mumford-Tate domains of $\Q$-Hodge structures of type $\{(-1,0),(0,-1)\}$ with endomorphisms by a fixed algebra $L$ to determine some initial restrictions on the $\Q$-Hodge structure $V=H_1(F_b,\Q)$, for $F_b$ a very general fiber of $f$. 

\begin{lem}\label{VHS lemma}
Suppose $\mathbb{V}$ is a polarized variation of $\mathbb{Q}$-Hodge structures over a non-singular variety $B$. Let $V_b$ be a very general fiber and consider the decompostition of $V_b$ as a sum of simple non-isomorphic $\Q$-Hodge structures
\[V_b=\oplus_{i=1}^k V_i^{n_i}.\]
Then there exists a corresponding decomposition
\[\mathbb{V}=\oplus_{i=1}^k \mathbb{V}_i^{n_i}\]
such that $(\mathbb{V}_i)_b=V_i$ for every $1\le i\le k$.
\end{lem}
\begin{proof}
Consider the induced polarized $\Q$-Hodge structure on $\End _\Q(V_b,V_b)$. Then each of the projection maps $p_i\colon V_b\rightarrow V_i$ is a Hodge cycle of type $(0,0)$ in $\End _\Q(V_b,V_b)\cong V_b\otimes V_b^*$. However for $b$ very general, the Hodge cycles of type $(0,0)$ in $V_b\otimes V_b^*$ are locally constant and define a local system on $B$ underlying a polarized variation of $\mathbb{Q}$-Hodge structures of type $(0,0)$ (see the proof of Proposition 7.5 in \cite{deligne}). In particular, the Hodge cycle $p_i$ in $V_b\otimes V_b^*$ lifts to a section $\mathcal{P}_i$ of $\End (\mathbb{V},\mathbb{V})$ and defining $\mathbb{V}_i=\mathcal{P}_i(V)$ yields a decomposition of polarized variations of $\mathbb{Q}$-Hodge structures $\mathbb{V}=\oplus_{i=1}^k \mathbb{V}_i^{n_i}$ with the desired property. 
\end{proof}

\begin{lem}\label{1dim}Consider the polarized variation of $\Q$-Hodge structures $R_1f_*\mathbb{Q}$ on a Kodaira fibration $f\colon S\rightarrow B$.   Decompose the $\Q$-Hodge structure $V=H_1(F_b,\Q)$, for $F_b$ a very general fiber of $f$, as a sum of simple non-isomorphic $\Q$-Hodge structures 
\[V=\oplus_{i=1}^k V_i^{n_i}.\]
If there is some $V_i$ of dimension $2$, then the variation of $\Q$-Hodge structures $(p_i)_*(R_1f_*\mathbb{Q})$ induced by projection onto the $i$-th factor of $V$ is isotrivial.
\end{lem}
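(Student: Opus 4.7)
The plan is to combine the Albert classification of Section~\ref{background} with the Mumford--Tate domain dimension formulas computed there to show that, when $V_i$ is $2$-dimensional, the period map of the isotypic sub-VHS $\mathbb{W}_i := (p_i)_*(R^1 f_*\Q)$ must be constant, which forces $\mathbb{W}_i$ to be locally constant.

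I would first apply the Albert classification to $V_i$. Writing $\dim_\Q V_i = 2 = 2n$, so $n = 1$, the divisibility constraint $l = [F:\Q] \mid n$ forces $l = 1$ and $F = \Q$. Examining the four Albert types against the relation $2n = m[L_i:\Q]$: Types II and III (quaternion algebras, requiring $[L_i:\Q] = 4l$) are ruled out, Type I forces $L_i = \Q$ and $m = 2$, and Type IV forces $L_i$ to be an imaginary quadratic field with $m = q = l = 1$ (and $r_1 + s_1 = mq = 1$). By the formulas in Section~\ref{background}, the corresponding Mumford--Tate domains are $\mathcal{D}^3_1 \cong \mathbb{H}$, of dimension $1$, in the Type I case, and the single point $\mathcal{D}^1_{1,0}$ in the Type IV case. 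Since the Mumford--Tate group of the isotypic power $V_i^{n_i}$ coincides with that of $V_i$, these same domains also govern $\mathbb{W}_i$.

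In the Type IV (CM) case the period map is automatically constant because its target is a point. In the Type I case, Deligne's theorem shows that the monodromy group $\Gamma$ of $\mathbb{W}_i$ is arithmetic in $SL_2(\Z)$, and by Baily--Borel the quotient $\Gamma \backslash \mathbb{H}$ is an affine algebraic curve (a modular curve); since $B$ is projective, any morphism $B \to \Gamma \backslash \mathbb{H}$ is constant, so the period map is again constant. In both cases a constant period map, combined with the compactness of the isotropy of a point in the Mumford--Tate domain and the arithmeticity of the polarized monodromy, forces the monodromy to be finite, and hence $\mathbb{W}_i$ is locally constant.

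The main obstacle I anticipate is the Type I case: one must justify that $\Gamma \backslash \mathbb{H}$ is an affine algebraic curve (Baily--Borel plus the non-cocompactness of $\Gamma \subset SL_2(\R)$, equivalently the presence of cusps) and then convert constancy of the period map into genuine local constancy of the VHS via compactness of stabilizers in the Mumford--Tate domain together with arithmeticity of monodromy. The Type IV case, by contrast, is immediate from the vanishing dimension of the relevant Mumford--Tate domain.
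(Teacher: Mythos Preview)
Your overall strategy---showing that the period map of the sub-VHS $\mathbb{W}_i$ lands in a target that admits no non-constant map from the complete curve $B$---is exactly the mechanism the paper uses. However, the paper's route is shorter and avoids a genuine gap in your Type~I argument.

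The paper does not invoke the Albert classification for $V_i$ at all. It observes directly that because $V_i$ is $2$-dimensional, the image $\overline{D}$ of the Mumford--Tate domain in $\mathcal{A}_g$ splits off a factor $\mathcal{A}_1$, so $p_i$ induces a map from the complete curve $C$ (the image of $B$) to the coarse moduli space $\mathcal{A}_1$. Since $\mathcal{A}_1$ is the affine $j$-line, this map is constant, and hence the sub-VHS is locally constant. This handles your Type~I and CM cases in one stroke, with no case analysis.

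Your Type~I argument has a real gap. You assert that ``Deligne's theorem shows that the monodromy group $\Gamma$ of $\mathbb{W}_i$ is arithmetic in $SL_2(\Z)$,'' but no theorem of Deligne yields arithmeticity of monodromy for an arbitrary polarized VHS; Deligne's results give semisimplicity and the theorem of the fixed part, not finite index. Without $\Gamma$ of finite index in $SL_2(\Z)$, Baily--Borel does not apply and $\Gamma\backslash\mathbb{H}$ need not be an algebraic variety (if $\Gamma$ is trivial it is just $\mathbb{H}$), so you cannot conclude that a map from projective $B$ is constant. The fix is exactly the paper's move: the monodromy preserves an integral polarized lattice and hence is contained in $SL_2(\Z)$, so the period map factors through $SL_2(\Z)\backslash\mathbb{H}=\mathcal{A}_1\cong\mathbb{A}^1$, which is affine independently of what $\Gamma$ itself is. Once you make that replacement, your argument and the paper's coincide.
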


\begin{proof} Without loss of generality we may assume that $i=1$, so that $\dim V_1=2$.  If $L$ is the endomorphism algebra of $V$, let $D$ denote the connected component through $V$ of the Mumford-Tate domain of polarizable $\Q$-Hodge structures of type $\{(-1,0),(0,-1)\}$ with endomorphisms by $L$.  If $g$ is the genus of the fibers of the Kodaira fibration $f$, then the image $\overline{D}$ of $D$ in the moduli space $\mathcal{A}_g$ of principally polarized abelian varieties of dimension $g$ is of the form
\[\overline{D}\cong \mathcal{A}_1\times \overline{D}.'\]
Now, after a possible base change to get a locally constant level structure, the Kodaira fibration $f\colon S\rightarrow B$ corresponds to a complete curve $C$ in the fine moduli space $\mathcal{M}_{g}[n]$ of curves of genus $g$ with level $n\ge 3$ structure. Consider the image $\overline{C}$ of $C$ in $\mathcal{A}_g$. Letting $p_1$ denote the projection map of $D$ onto $\mathcal{A}_1$ via the above isomorphism, consider the image $(p_1)_*(\overline{C})$ of $\overline{C}$ in $\mathcal{A}_{1}$. Since the coarse moduli space $\mathcal{A}_{1}$ is isomorphic to the affine line via the map sending an elliptic curve to its $j$-invariant, we, in particular, have that $\mathcal{A}_{1}$ contains no complete curves. Hence the image of the projection $(p_1)_*(\overline{C})$ is just a finite union of points.  In particular, the variation of $\Q$-Hodge structures $(p_1)_*(R_1f_*\mathbb{Q})$ is isotrivial. 
\end{proof}

\begin{rem}\label{CMrem} In the above notation, if the simple $\Q$-Hodge structure $V_i$ in the decomposition $V=\oplus_{i=1}^k V_i^{n_i}$ is of CM type, meaning that the Hodge group of $V_i$ is commutative, then the Mumford-Tate domain $D_i$ through $V_i$ is a point and hence the variation of $\Q$-Hodge structures $(p_i)_*(R_1f_*\mathbb{Q})$ is isotrivial. 
\end{rem}

\begin{cor}\label{2dim} If $f\colon S\rightarrow B$ is a Kodaira fibration, then in the decomposition $V=\oplus_{i=1}^k V_i^{n_i}$, there must exist some simple Hodge structure $V_i$ of dimension at least $4$ on which the variation of $\Q$-Hodge structures  $(p_i)_*(R_1f_*\mathbb{Q})$ is not isotrivial. In particular, this $V_i$ is not of CM type. 
\end{cor}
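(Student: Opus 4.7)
The plan is to argue by contradiction, using the fact, established in the introduction via Schmid's rigidity theorem, that the variation $R^1f_*\Q$ is not locally constant (because the Kodaira fibration $f$ is non-isotrivial and hence the induced map $B\to\mathcal{A}_g$ has $1$-dimensional image). The goal is to rule out the possibility that every simple summand $V_i$ is either handled by Lemma \ref{1dim} or by Remark \ref{CMrem}.

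First I would observe that since $V=H_1(F_b,\Q)$ is a polarizable $\Q$-Hodge structure of type $\{(-1,0),(0,-1)\}$, each simple summand $V_i$ in the decomposition $V=\oplus_{i=1}^k V_i^{n_i}$ is also of type $\{(-1,0),(0,-1)\}$. The condition $\overline{V_i^{-1,0}}=V_i^{0,-1}$ forces $\dim_\Q V_i$ to be even, so no simple summand has dimension $1$ or $3$; the possibilities are $\dim V_i \in \{2,4,6\}$ when $\dim V=2g$ is small (and more generally any even dimension).

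Next I would use the semisimplicity of the category of polarizable $\Q$-variations of Hodge structure over $B$ (a direct consequence of Deligne's semisimplicity theorem) to lift the decomposition of $V$ at a very general fiber to a global decomposition
\[R^1f_*\Q \;=\; \bigoplus_{i=1}^k \mathbb{V}_i^{\,n_i},\]
where $\mathbb{V}_i\coloneqq (p_i)_*(R^1f_*\Q)$ is the sub-variation corresponding to the isotypic component containing $V_i$. Under this decomposition, $R^1f_*\Q$ is locally constant if and only if each $\mathbb{V}_i$ is locally constant.

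Now suppose for contradiction that for every $i$ with $\dim V_i \ge 4$ the variation $\mathbb{V}_i$ were locally constant. Lemma \ref{1dim} handles the remaining case $\dim V_i = 2$, showing $\mathbb{V}_i$ is locally constant there as well. Combining the two, every summand $\mathbb{V}_i$ would be locally constant, hence so would $R^1f_*\Q$, contradicting non-isotriviality of $f$. Therefore some $V_i$ with $\dim V_i\ge 4$ has $(p_i)_*(R^1f_*\Q)$ not locally constant, and by the contrapositive of Remark \ref{CMrem} such a $V_i$ cannot be of CM type. There is no real obstacle here beyond justifying the global splitting of the variation; everything else is a direct application of the previously established Lemma \ref{1dim} and Remark \ref{CMrem}.
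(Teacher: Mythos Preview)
Your proposal is correct and follows essentially the same route as the paper's proof, which simply invokes non-isotriviality (so $R^1f_*\Q$ is not locally constant), then applies Lemma \ref{1dim} and Remark \ref{CMrem}. Your version fleshes out the implicit step of globally decomposing the variation, but the underlying argument is identical.
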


\begin{proof}
By definition the fibration $f\colon S\rightarrow B$  is non-isotrivial and thus, as discussed in Section \ref{intro}, the variation of $\Q$-Hodge structures $R_1f_*\Q$ is not isotrivial. Hence the result follows from Lemma \ref{1dim} and Remark \ref{CMrem}. 
\end{proof}


\subsection{Weakly Special Varieties and Kodaira Fibrations}\label{mvz section}
Recall that a \emph{connected Shimura variety} $Z$ is the quotient of a Hermitian symmetric domain $X^+$ by a congruence subgroup $\Gamma$ of some reductive algebraic group $G$ defined over $\mathbb{Q}$ such that $G(\mathbb{R})$ acts on $X^+$ by conjugation. The pair $(X^+,G)$ is called the \emph{Shimura datum} for the Shimura variety $Z$.  For instance, the moduli space $\mathcal{A}_g[n]=\ker(Sp(2g,\Z)\rightarrow Sp(2g, \Z/n\Z))\backslash \mathcal{D}^3_g$ is a connected Shimura variety with Shimura datum $(\mathcal{D}^3_g, Sp(2g, \Q))$. 

Let $Z=\Gamma\backslash X^+$ be a connected Shimura variety with Shimura datum $(X^+, G)$. A \emph{weakly special variety} in $Z$ is either a Shimura subvariety of $Z$ or the image in $Z$ of $X_1^+\times \{x_2\}$ for some Shimura subdatum $(X_1\times X_2, G_1\times G_2)$ of $(X^+,G)$.

Moonen shows in \cite[Theorem 4.3]{moonenws} that an irreducible subvariety $Y$ of a Shimura variety $Z$ is weakly special if and only if $Y$ is totally geodesic. Moreover, Liu--Sun--Yang--Yau show in \cite[Theorem 1.4]{LSYY} that any compact submanifold in $\mathcal{M}_g$ whose image in $\mathcal{A}_g$ under the Torelli map is totally geodesic must be a ball quotient. Combining these, one deduces the following additional restriction on the summands of a Kodaira fibration.

\begin{lem}\label{shimura curve}
Let $f\colon S\rightarrow B$ be a Kodaira fibration whose very general fiber has rational homology $V=H_1(F_b,\Q)$ decomposing as a sum of simple $\Q$-Hodge structures 
\[V=V_1\oplus V_2\oplus \cdots \oplus V_n.\] 
If there is some $V_i$ of dimension $4$ with endomorphism algebra an indefinite quaternion algebra such that the variation of $\Q$-Hodge structures $(p_i)_*(R_1f_*\mathbb{Q})$ induced by projection onto the factor $V_i$ is not isotrivial, then there exists  $j\ne i$ such that $(p_j)_*(R_1f_*\mathbb{Q})$ is not isotrivial.
\end{lem}

\begin{proof}
Suppose there is some $4$-dimensional simple summand $V_i$ with endomorphism algebra $Q$ an indefinite quaternion algebra such that $(p_i)_*(R_1f_*\mathbb{Q})$ is not isotrivial.  As discussed in Section \ref{Hermitian symmetric}, the connected component through $V_i$ of the Mumford-Tate domain of $4$-dimensional $\Q$-Hodge structures of type $\{(-1,0),(0,-1)\}$ with endomorphisms by $Q$ is the Hermitian symmetric domain $\mathcal{D}^3_1$, which is just the upper half-plane $\mathbb{H}$. Its image inside $\mathcal{A}_2[n]$ is a compact Shimura curve $\mathcal{S}_Q$ given by taking the quotient of  $\mathbb{H}$ by the arithmetic group $\ker(SL(2,\mathcal{O}_Q)\rightarrow SL(2,\mathcal{O}_Q/n\mathcal{O}_Q))$ \cite[Proposition 9.2]{shimura2} (or see \cite[Section 5]{milne2}). 

Now suppose, for contradiction, that for every $j\ne i$, the variation of $\Q$-Hodge structures $(p_j)_*(R_1f_*\mathbb{Q})$ is isotrivial. Then we may base change to obtain a Kodaira fibration $f'\colon S'\rightarrow B'$ such that $(p_j)_*(R_1f'_*\mathbb{Q})$ is constant for every $j\ne i$. In other words, the only varying part of the cohomology of the fibers of $f'$ is given by the $4$-dimensional variation of $\Q$-Hodge structures $\mathbb{V}_i\coloneqq(p_i)_*(R_1f'_*\mathbb{Q})$, where by Lemma \ref{VHS lemma} for every $b\in B'$ there is an indefinite quaternion algebra $Q'$ embedding into the endomorphism algebra of $\mathbb{V}_{i,b'}$.

The base $B'$ of this Kodaira fibration thus yields a compact curve in  $\mathcal{M}_g$ whose image in $\mathcal{A}_g$ under the Torelli map is an \'etale cover of $\{E_0\}\times \mathcal{S}_Q$ for some elliptic curve $E_0$ and hence by  \cite[Theorem 4.3]{moonenws} is totally geodesic. But then by \cite[Theorem 1.4]{LSYY}, this image must be a complex ball quotient. But this contradicts the fact that $\mathcal{S}_Q$ is a quotient of the upper half plane $\mathbb{H}$. Hence $(p_i)_*(R_1f_*\mathbb{Q})$ must in fact be isotrivial. 
\end{proof}

\section{Genus $3$ Kodaira fibrations: the simple case}\label{section simple case}
We now specialize to the case of a Kodaira fibration $f\colon S\rightarrow B$ whose fibers have the smallest possible genus, meaning genus $3$, having the additional property that the Hodge structure $V=H_1(F_b,\Q)$ on a very general fiber $F_b$ is simple. We consider the possible endomorphism algebras of this simple $\Q$-Hodge structure $V$. Recall from Definition \ref{definition general complete intersection} the definition of a general complete intersection Kodaira fibration.  We thus, in particular, consider the possible endomorphism algebras of the simple Hodge structure $V$ when  $f\colon S\rightarrow B$ is a general complete intersection Kodaira fibration.

\begin{prop}\label{simple case}
The possible endomorphism algebras of a simple $\Q$-Hodge structure $V$ arising as $V=H_1(F_b,\Q)$, for $F_b$ a very general fiber of a genus $3$ Kodaira fibration, are:
\begin{enumerate}
\item \label{o1} $\Q$
\item\label {o2} A totally real field $L$ such that $[L:\Q]=3$.
\item \label{o3} An imaginary quadratic field.
\end{enumerate}
Of these possibilities, both \eqref{o1} and \eqref{o2} arise from general complete intersection Kodaira fibrations. By contrast, possibility \eqref{o3} cannot arise from a general complete intersection Kodaira fibration and thus is not known to actually occur. 
\end{prop}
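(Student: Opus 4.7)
The plan is to combine Albert's classification with the divisibility constraints extracted from the Mumford–Tate domain analysis in Section \ref{background} (applied to the dimension $2n=6$), together with the nonisotriviality restriction from Corollary \ref{2dim}, to produce the list, and then to analyze the decomposable locus inside the appropriate Shimura subvarieties of $\mathcal{A}_3[n]$ to decide whether Proposition \ref{complete intersection kod fib} applies.

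First, recall that $V$ is a simple polarized $\Q$-Hodge structure of type $\{(-1,0),(0,-1)\}$ of dimension $2n=6$, so its endomorphism algebra $L$ is a division algebra with positive involution satisfying $[L:\Q]\mid 6$, and the center $F$ of $L$ fixed by the Rosati involution has $l=[F:\Q]$ dividing $n=3$, so $l\in\{1,3\}$. I would run through Albert's four types: for Types II and III one has $[L:\Q]=4l$, and $4l\mid 6$ is impossible, so both are ruled out; for Type I one has $L=F$ with $[L:\Q]=l\in\{1,3\}$, giving $L=\Q$ or $L$ a totally real cubic, which are options \eqref{o1} and \eqref{o2}; for Type IV one has $[L:\Q]=2lq^2\mid 6$, forcing $(l,q)=(1,1)$ (imaginary quadratic, option \eqref{o3}) or $(l,q)=(3,1)$ (CM of degree $6$). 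The degree $6$ CM case makes $V$ one-dimensional over $L$, hence of CM type, so the Mumford–Tate domain is a point and $V$ is isotrivial; since $V$ is the unique simple summand, Corollary \ref{2dim} rules this out. The same Corollary plus Remark \ref{CMrem} also forces the signature $(r_1,s_1)$ in the imaginary quadratic case to satisfy $r_1s_1>0$, i.e.\ $\{r_1,s_1\}=\{1,2\}$, so the Mumford–Tate domain is the two-dimensional complex ball $\mathcal{D}^1_{2,1}$.

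For the constructive half I would verify the hypotheses of Proposition \ref{complete intersection kod fib}. For \eqref{o1} this is Remark \ref{complete intersection remark}: take $Z=\mathcal{A}_3[n]$, where $\mathcal{A}_3^{\mathrm{dec}}\cong(\mathcal{A}_2\times\mathcal{A}_1)[n]\cup(\mathcal{A}_1^3)[n]$ has codimension $2$ and the Satake boundary has codimension $3$. For \eqref{o2} take $Z$ to be the Hilbert modular threefold in $\mathcal{A}_3[n]$ parametrizing abelian threefolds with real multiplication by the ring of integers of the totally real cubic field $L$. The key observation is that $Z\cap\mathcal{A}_3^{\mathrm{dec}}=\emptyset$: a decomposable principally polarized abelian threefold has factors of dimensions $1$ and $2$ (or three factors of dimension $1$), so its endomorphism algebra is a product of the endomorphism algebras of its factors, and since $L$ is a field of degree $3$, an embedding $L\hookrightarrow \End(A)\otimes\Q$ must project injectively into the endomorphism algebra of some single factor, which is impossible as no abelian variety of dimension $1$ or $2$ admits real multiplication by a cubic field. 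The Baily–Borel boundary of a Hilbert modular threefold consists of finitely many cusps, so has codimension $3$; both hypotheses of Proposition \ref{complete intersection kod fib} hold.

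For the non-construction in case \eqref{o3} I would argue that the natural (and in fact unique maximal) candidate for $Z$ is the Picard modular surface $\mathcal{B}$ in $\mathcal{A}_3[n]$ of abelian threefolds with action of $\mathcal{O}_K$ of signature $(2,1)$, which is the ball quotient $\Gamma\backslash\mathcal{D}^1_{2,1}$ of complex dimension $2$. The main obstacle, and the heart of the argument, is to show that $\mathcal{B}\cap\mathcal{A}_3^{\mathrm{dec}}$ has codimension only $1$ in $\mathcal{B}$, so hypothesis (1) of Proposition \ref{complete intersection kod fib} fails. For this I would exhibit an explicit $1$-dimensional sublocus: take $A=A_1\times E$, where $E$ is a fixed CM elliptic curve with complex multiplication by $\mathcal{O}_K$ of signature $(0,1)$, and $A_1$ is a $2$-dimensional abelian variety with $\mathcal{O}_K$-action of signature $(2,0)$, i.e., CM by $\mathcal{O}_K$ through both embeddings on the same side; the moduli of such pairs with the prescribed $\mathcal{O}_K$-action form a Shimura curve inside $\mathcal{B}$ (a quaternionic Shimura curve, identified with the $(1,1)$-Picard modular curve for the complementary $\mathcal{O}_K$-action). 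A small dimension count of signatures $(r_1,s_1)=(1,1)$ on $A_1$ together with $(0,1)$ on $E$ summing to $(2,1)$ on $A$ confirms this sublocus is $1$-dimensional. Any smaller $Z$ refining the $\mathcal{O}_K$-endomorphism condition is contained in $\mathcal{B}$ and has $\dim Z\le 2$ with the same decomposable sublocus, so the same codimension-$1$ obstruction persists. This prevents a general complete intersection curve in $Z\cap\mathcal{T}_3[n]$ from avoiding $\mathcal{A}_3^{\mathrm{dec}}$, so the construction of Proposition \ref{complete intersection kod fib} cannot produce a Kodaira fibration realizing \eqref{o3}. The main technical obstacle is the last dimension computation and the verification that no refinement of $Z$ avoids the decomposable locus; the rest is bookkeeping against the Albert/Shimura tables of Section \ref{background}.
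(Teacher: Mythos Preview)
Your classification argument (ruling out Types II and III via $4l\mid 6$, and the CM sextic case via Corollary \ref{2dim}) is correct and matches the paper. However, there is a genuine error in your treatment of case \eqref{o2}.

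You claim $Z\cap\mathcal{A}_3^{\mathrm{dec}}=\emptyset$ on the grounds that the endomorphism algebra of a decomposable abelian threefold is ``a product of the endomorphism algebras of its factors,'' so that a cubic field would have to embed into the endomorphism algebra of a single factor of dimension $\le 2$. This is false when the factors are isogenous: if $A=E^3$ for an elliptic curve $E$ with $\End_\Q(E)=\Q$, then $\End_\Q(A)=M_3(\Q)$, and the totally real cubic field $L$ embeds into $M_3(\Q)$ via its regular representation (compatibly with the Rosati involution, since $L$ is totally real). So the decomposable locus inside the Hilbert modular threefold is \emph{not} empty. The paper's argument is that any non-simple $W$ in this locus must be of the form $E^3$ (a $4$-dimensional simple summand cannot carry an action of a cubic field, since $3\nmid 4$), and the $E^3$ locus is a countable union of copies of $\mathbb{H}$, hence codimension $2$ in the $3$-dimensional Hilbert modular threefold. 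Your conclusion that Proposition \ref{complete intersection kod fib} applies happens to be correct, but the reasoning is wrong and needs to be replaced.

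For case \eqref{o3}, your overall idea of exhibiting a $1$-dimensional decomposable sublocus is right, but your description is internally inconsistent: you first assign $A_1$ signature $(2,0)$, which would make $A_1$ of CM type and the locus $0$-dimensional, then switch to $(1,1)$ in the next sentence. The paper's version simply enumerates the possible non-simple decompositions $W=E\oplus S$, $W=E\oplus (E')^2$, $W=(E')^3$ (with $E$ a $2$-dimensional Hodge structure having CM by $L$ and $S$ a $4$-dimensional one with $L$-action of signature $(1,1)$), and notes that each corresponding locus in $\mathcal{D}^1_{2,1}$ is a countable union of copies of $\mathbb{H}$, hence codimension $1$. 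Your final paragraph about ``no refinement of $Z$'' is not needed for the statement as phrased: the claim is only that possibility \eqref{o3} cannot arise from the general complete intersection construction of Definition \ref{definition general complete intersection}, not that no Kodaira fibration whatsoever realizes it.
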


\begin{proof}
Let $f\colon S\rightarrow B$ be a genus $3$ Kodaira surface such that the $\Q$-Hodge structure $V=H_1(F_b,\mathbb{Q})$ is simple. Let $L$ be the endomorphism algebra of $V$, let $F_0$ be the center of $L$, and let $F$ be the subfield of $F_0$ fixed by the Rosati involution (see Section \ref{background}). Writing $6=m[L:\mathbb{Q}]$ and $q^2=[L:F_0]$, observe that $q^2$ must divide $6$ and therefore we must have $q=1$. So, as discussed in Section \ref{background}, Albert's classification  \cite[Chapter X, \S11]{albert} yields that $L$ is either of Type I, namely $L$ is a totally real field, or $L$ is of Type IV and a CM field. Letting $l=[F:\mathbb{Q}]$, we know (see Section \ref{background}) that $l$ divides $3$. Hence if $L$ is of Type I, then either $L=\mathbb{Q}$ or $L$ is a totally real field of degree $3$ over $\mathbb{Q}$. If $L$ is of Type IV, then either $L$ is an imaginary quadratic field or $L$ is a CM field of degree $6$ over $\mathbb{Q}$. In the latter case, however, since $L$ is a CM field such that $\dim_LV=1$, the Hodge structure $V$ is of CM type \cite[Proposition V.3]{domain}. 

Since, by Remark \ref{CMrem}, the Hodge structure $V$ cannot be of CM type, we thus have only the following three possibilities for the endomorphism algebra $L$:
\begin{enumerate}
\item \label{p1}$L=\mathbb{Q}$
\item \label{p2}$L$ is a totally real field with $[L:\mathbb{Q}]=3$
\item \label{p3}$L$ is an imaginary quadratic field.
\end{enumerate}

Observe that by Remark \ref{complete intersection remark}, we know that possibility \eqref{p1} occurs and can by realized by a complete intersection Kodaira fibration. Now consider possiblity \eqref{p2}, namely the case when $L$ is a totally real field such that $[L:\mathbb{Q}]=3$. In this case, following Shimura \cite[Section 2.6]{shimura} (see Section \ref{background}), the connected component $D$ through $V$ of the Mumford-Tate domain of polarizable $\Q$-Hodge structures of type $\{(-1,0),(0,-1)\}$ with endomorphisms by $L$ is the product $\mathcal{D}^3_1\times \mathcal{D}^3_1\times \mathcal{D}^3_1$, where recall that $\mathcal{D}^3_1$ is just the upper half plane in $\mathbb{C}$. Consider the image $\overline{D}$ of $D$ inside the fine moduli space $\mathcal{A}_{3}[n]$, with $n\ge 3$. This Shimura variety $\overline{D}$ is the Hilbert modular threefold obtained by taking the quotient of $\mathcal{D}^3_1\times \mathcal{D}^3_1\times \mathcal{D}^3_1$ by $\ker(SL(2,\mathcal{O}_L)\rightarrow SL(2,\mathcal{O}_L/n\mathcal{O}_L))$. The boundary $\overline{D}^*\backslash \overline{D}$ of the Satake-Baily-Borel compactification $\overline{D}^*$ of $\overline{D}$ consists of a finite collection of points \cite{shimizu}. 

We now consider the elements of $\overline{D}\cap \mathcal{T}_3[n]^{\mathrm{dec}}$. Since for $g=3$ we have $\mathcal{T}_3[n]^{\mathrm{dec}}=\mathcal{A}_3[n]^{\mathrm{dec}}$, these are just the decomposable elements of $\overline{D}$. Suppose $w$ is a decomposable element of  $\overline{D}$, meaning that $w$ can be written as the product of two smaller-dimensional principally polarized abelian varieties. Then $w$ is the image under the quotient by $\ker(SL(2,\mathcal{O}_L)\rightarrow SL(2,\mathcal{O}_L/n\mathcal{O}_L))$ of some non-simple $6$-dimensional $\Q$-Hodge structure $W$ whose endomorphism algebra contains $L$. Consider the decomposition $W=\sum_{i=1}^kW_i^{n_i}$ of $W$ as a sum of simple non-isomorphic $\Q$-Hodge structures. Since $W$ has endomorphisms by $L$, each summand of the form $W_i^{n_i}$ must have endomorphisms by $L$.  If there were some $W_i$ of dimension $4$, then since $W$ has dimension $6$ we would have to have $n_i=1$, but then $3=[L:\mathbb{Q}]$ would need to divide $4$, which is impossible. Hence we must have $W= E^3$ for some $2$-dimensional $\Q$-Hodge structure $E$. From Section \ref{background}, the locus of $\Q$-Hodge structures $W$ of this form inside the Mumford-Tate domain $D$ is thus a countable union of copies of the upper half-plane $\mathcal{D}^3_1=\mathbb{H}$. Namely, the locus of decomposable principally polarized abelian varieties inside $\overline{D}$ has codimension $2$. Since we have established that the boundary $\overline{D}^*\backslash \overline{D}$ has codimension $3$, by Proposition \ref{complete intersection kod fib} there exists a general complete intersection Kodaira fibration realizing possibility \eqref{p2} on the list. 

The last possibility we consider is possibility \eqref{p3} on the list, namely the case when $L$ is an imaginary quadratic field.  Following Shimura \cite[Section 2.6]{shimura} (see Section \ref{background}), the corresponding Mumford-Tate domain $D$ through $V$ of $6$-dimensional polarizable $\Q$-Hodge structures of type $\{(-1,0),(0,-1)\}$ with endomorphisms by $L$ is the complex unit ball $\mathcal{D}^1_{2,1}=\{(u,v)\in \mathbb{C}^2\mid |u|^2+|v|^2<1\}.$ Consider the image $\overline{D}$ of $D$ inside the fine moduli space $\mathcal{A}_{3}[n]$, with $n\ge 3$.  This Shimura variety $\overline{D}$ is the Picard modular surface obtained by taking the quotient of $\mathcal{D}^1_{2,1}$ by $\ker(U((2,1)\mathcal{O}_L)\rightarrow U((2,1)\mathcal{O}_L/n\mathcal{O}_L))$ and its Satake-Baily-Borel compactification once again has boundary consisting of a finite collection of points \cite{holzapfel}.

As in the previous case, consider the elements of $\overline{D}\cap \mathcal{T}_3[n]^{\mathrm{dec}}$. These correspond to non-simple $\Q$-Hodge structures $W$ with endomorphisms by the imaginary quadratic field $L$. In this case however, the decomposition $W=\sum_{i=1}^kW_i^{n_i}$ into simple non-isomorphic $\Q$-Hodge structures can take multiple possible forms. One option is $W=E\oplus S$, where $E$ is a $2$-dimensional simple $\Q$-Hodge structure with endomorphisms by $L$ and $S$ is a $4$-dimensional $\Q$-Hodge structure with endomorphisms by $L$.  Another option is $W=E\oplus E'^2$, where $E$ is a $2$-dimensional $\Q$-Hodge structure with endomorphisms by $L$ and $E'$ is just a $2$ dimensional $\Q$-Hodge structure. The last option is $W=E'^3$ where $E'$ is a $2$-dimensional Hodge structure. From Section \ref{background}, the corresponding loci of $\Q$-Hodge structures $W$ of these forms inside the Mumford-Tate domain $D$ are all countable unions of the upper half-plane $\mathbb{H}$. Thus, in particular, the decomposable locus of $\overline{D}$ has codimension $1$ in $\overline{D}$. Namely, there is no general complete intersection Kodaira fibration realizing possibility \eqref{p3} on the list. 
\end{proof}

\begin{rem}\label{picard remark}
The proof of Proposition \ref{simple case} shows that if there were a genus $3$ Kodaira fibration such that the Hodge structure on a very general fiber had endomorphism algebra an imaginary quadratic field $L$, there would have to be a complete curve $C$ inside the Picard modular surface $\overline{D}$ parametrizing abelian varieties in $\mathcal{A}_3[n]$ with endomorphisms by $L$ such that $C$ avoided $\mathcal{A}_3[n]^{\mathrm{dec}}$. The proof also shows that $\overline{D}\cap \mathcal{A}_3[n]^{\mathrm{dec}}$ consists of a finite number of curves occurring as images of copies of the upper half plane inside the complex unit ball. 

Thus, while in general it is difficult to say whether there can be a complete curve $C$ on a Picard modular surface avoiding this finite set of curves, there are certain cases where the geometry of the Picard modular surface is understood well enough to say that this cannot occur. For instance, Holzapfel shows in \cite[Corollary 5.4.18]{holzapfel2} that if $L$ has discriminant $23$, then the minimal resolution of the Satake-Baily-Borel compactification of its corresponding Picard modular surface $\overline{D}$ is just the projective plane $\mathbb{P}^2$. Since any two  curves on $\mathbb{P}^2$ must intersect, any curve $C$ on $\overline{D}$ will intersect $\mathcal{A}_3[n]^{\mathrm{dec}}$. Namely, there can be no genus $3$ Kodaira fibration such that the Hodge structure on the general fiber has endomorphism algebra an imaginary quadratic field $L$ with discriminant $23$. 
\end{rem}


\section{Hecke Correspondences and Polarized Isogenies}\label{hecke section}
The case when the Hodge structure $V=H_1(F_b,\mathbb{Q})$ on a very general fiber is non-simple is much more subtle than the case when $V$ is simple. In particular, although $V$ decomposes as a $\mathbb{Q}$-Hodge structure, for $V$ to be the homology of a smooth curve the principally polarized complex abelian variety corresponding to $V$ cannot decompose. So if $V$ corresponds to a point $P\in \mathcal{A}_3[n]$, then $P$ is isogenous but not isomorphic to a point in $\mathcal{A}_3[n]^\mathrm{dec}$. Hence we are interested in the behavior of points of  $\mathcal{A}_3[n]^\mathrm{dec}$ under isogeny. 

In particular, in order to realize general complete intersection Kodaira fibrations with non-simple Hodge structure on the homology of their very general fibers, we will need to study the behavior of sub-loci of $\mathcal{A}_3[n]^\mathrm{dec}$ under compatible systems of isogenies on each point. We introduce the notion of Hecke correspondences below to make this idea more precise. 

Recall that a connected Shimura variety $Z$ is the quotient of a Hermitian symmetric domain $X^+$ by a congruence subgroup $\Gamma$ of some reductive algebraic group $G$ defined over $\mathbb{Q}$ such that $G(\mathbb{R})$ acts on $X^+$ by conjugation. 

\begin{defn}
Let $Z=\Gamma\backslash X^+$ be a connected Shimura variety. The \emph{Hecke correspondence} associated to an element $a\in G(\Q)$ consists of  the diagram
\[\begin{tikzcd}
\Gamma\backslash X^+&\arrow{l}[swap]{q}\Gamma_a\backslash X^+\arrow{r}{q_a}&\Gamma\backslash X^+
\end{tikzcd}\]
where 
\begin{itemize}
\item $\Gamma_a=\Gamma \cap a^{-1}\Gamma a$
\item $q(\Gamma_ax)=\Gamma x$
\item$ q_a(\Gamma_a x)=\Gamma ax$
\end{itemize}
In the above diagram, both $q$ and $q_a$ are finite morphisms of degree $[\Gamma:\Gamma_a]$. 

If $Z'$ is a closed irreducible subvariety of $Z$, any irreducible component of $q_a(q^{-1}Z)$ is called a \emph{Hecke translate} of $Z'$, which we denote by $aZ'$. 
\end{defn}

Suppose that $f\colon A\rightarrow B$ is an isogeny between principally polarized abelian varieties $(A,\lambda)$ and $(B,\mu)$. Then the pullback $f^*\mu$ is a polarization on $A$. We saw that $f$ is a \emph{polarized isogeny} if there exists an $n\in \mathbb{Z}$ such that $f^*\mu=n\lambda$. For a more thorough treatment of the distinction between isogenies and polarized isogenies see \cite{orr}. 

A point $Q\in \mathcal{A}_g[n]$ is a Hecke translate of another point $P\in \mathcal{A}_g[n]$ if and only if $P$ and $Q$ are related by a polarized isogeny, however the set of polarized isogenies between $P$ and $Q$ can be significantly smaller than the set of isogenies between them. For instance, in \cite[Proposition 3.1]{orr}, Orr shows that the isogeny class of an abelian surface with multiplication by a totally real quadratic field contains infinitely many distinct polarized isogeny classes. 

\begin{lem}\label{polarized isogeny lemma}
For $(A,\lambda)\in \mathcal{A}_g[n]$, consider the decomposition $V=\oplus_{i=1}^k V_i^{n_i}$ of $V=H_1(A,\mathbb{Q})$ into non-isomorphic simple $\mathbb{Q}$-Hodge structures. If each $n_i=1$ and $V_i$ has endomorphism algebra either $\mathbb{Q}$ or an imaginary quadratic field, then every isogeny class of $(A,\lambda)$ is a polarized isogeny class. 
\end{lem}
 
\begin{proof}
Let $L=\End(A)\otimes_\mathbb{Z} \mathbb{Q}$. So $L$ is the endomorphism algebra of $V$. Then under the given hypotheses $L$ decomposes as $L=\prod_{i=1}^kL_i$, where each $L_i$ is either $\mathbb{Q}$ or an imaginary quadratic field and $L$ is commutative. 

For any $(X,\nu)\in \mathcal{A}_g[n]$, let $\End^s X$ (respectively $\End^s_\mathbb{Q}X$) denote the subset of $\End X$ (respectively $\End_\mathbb{Q}X$) of endomorphisms of $X$ invariant under the Rosati involution on $\End_\mathbb{Q}X$ with respect to the polarization $\nu$.  Recall that the polarization $\nu$ is the class of an ample line bundle in $NS(A)$ and that $\nu$ induces an isogeny $\phi_\nu\colon A\rightarrow A^\vee$ of $A$ to its dual abelian variety $A^\vee$. Moreover, there is an isomorphism of groups $\delta\colon NS(X)\rightarrow \End^s X$ given by the restriction to $NS(X)$ of the map $\delta\colon NS_\mathbb{Q}(X)\rightarrow \End^s_\mathbb{Q}X$ defined by $\nu'\mapsto \phi_\nu^{-1}\phi_{\nu'}$ \cite[Proposition 5.2.1]{BL}. In particular, under the given hypotheses, we have $L^s=\prod_{i=1}^k\mathbb{Q}$ and an isomorphism of groups $NS(A)\cong \mathbb{Z}^k$. 

Suppose $(B,\mu)\in \mathcal{A}_g[n]$ is isogenous to $(A,\lambda)$ and let $f\colon A\rightarrow B$ be an isogeny. Then $f^*\mu$ is a polarization of $A$ and so we may view it as a tuple of integers $\prod_{i=1}^k n_i$. Precomposing the isogeny $f$ with the necessary elements of $\End V_i$, we may scale as needed to ensure that all of the integers $n_i$ are equal. This new isogeny $f'\colon A\rightarrow B$ is thus a polarized isogeny. 
\end{proof}


\section{Hecke Translation and the Decomposable Locus}\label{decomposable isogeny section}
Suppose that $f\colon S\rightarrow B$ is a genus $3$ Kodaira fibration such that the $\mathbb{Q}$-Hodge structure on a very general fiber $V=H_1(F_b,\Q)$ decomposes as $V=V_1\oplus V_2$.  Without loss of generality, we can assume that $\dim V_1=2$ and $\dim V_2=4$. Hence by Lemma \ref{1dim}, the variation of $\Q$-Hodge structures $(p_1)_*(R_1f_*\mathbb{Q})$ is isotrivial, meaning it is constant after possible base change. So $(p_1)_*(R_1f_*\mathbb{Q})$ corresponds to some fixed elliptic curve $E_0$. 

The question of  producing general complete intersection Kodaira fibrations with the above property then amounts to showing there exists a Hecke translate of the locus on points in $\mathcal{A}_3[n]^{\mathrm{dec}}$ of the form $E_0\times S$, for $S$ an abelian surface, such that this Hecke translate intersects the decomposable locus in codimension at least $2$. 

It is tempting to assume that, for a given sublocus $\mathcal{Z}$ of $\mathcal{A}_3[n]^{\mathrm{dec}}$ parametrizing abelian varieties with endomorphisms by some fixed ring, one can always find an element $a\in Sp(6,\mathbb{Q})$ such that $a\mathcal{Z}$ and $\mathcal{A}_3[n]^{\mathrm{dec}}$. 
In fact, suppose $\tilde{\mathcal{Z}}$ and $\tilde{\mathcal{A}}_3[n]^{\mathrm{dec}}$ are connected components of the lifts of $\mathcal{Z}$ and $\mathcal{A}_3[n]^{\mathrm{dec}}$ respectively to the Siegel upper half-space $\mathcal{D}^3_3$.
 Then Kleiman's results in \cite{kleiman} imply that there does exist an $a\in Sp(6,\mathbb{Q})$ such that $a\tilde{\mathcal{Z}}$ and $\tilde{\mathcal{A}}_3[n]^{\mathrm{dec}}$ meet transversally.
 However this does not imply that the images of $a\tilde{\mathcal{Z}}$ and $\tilde{\mathcal{A}}_3[n]^{\mathrm{dec}}$ in $\mathcal{A}_3[n]$ necessarily meet transversally.

 Indeed, suppose $\mathcal{Z}$ is the locus of points in $\mathcal{A}_3[n]^{\mathrm{dec}}$ of points of the form $E_0\times S$, for $E_0$ a fixed elliptic curve and $S$ an abelian surface such that $\End_\mathbb{Q}(S)$ contains a fixed indefinite quaternion algebra $L$.
  Then, as discussed in the proof of Lemma \ref{shimura curve}, the locus $\mathcal{Z}$ is a compact curve. Moreover, since $(p_1)_*(R_1f_*\mathbb{Q})$ is isotrivial, Lemma \ref{shimura curve} implies that every Hecke translate of $\mathcal{Z}$ has non-empty intersection with $\mathcal{A}_3[n]^{\mathrm{dec}}$.
 So in particular, no Hecke translate of $\mathcal{Z}$ meets $\mathcal{A}_3[n]^{\mathrm{dec}}$ transversally. 

Therefore, in order to prove the desired transversality results for the particular locus in $\mathcal{A}_3[n]^{\mathrm{dec}}$ that we want to consider, it will be necessary to first investigate in detail how Hecke translation affects the decomposability of points in various subloci of $\mathcal{A}_3[n]^{\mathrm{dec}}$.

Hence, fix an elliptic curve $E_0$ and consider the subsets of $\mathcal{A}_3[n]^{\mathrm{dec}}$ given by
\begin{equation}\label{bce}
\begin{aligned}
\mathcal{B}&=\{E_0\times S\in \mathcal{A}_3[n]^{\mathrm{dec}}\mid S \text{ is an abelian surface } \}\\
\mathcal{C}&=\{E_0\times S\in \mathcal{A}_3[n]^{\mathrm{dec}}\mid S\sim E_1\times E_2 \text{ for some elliptic curves } E_1, E_2\}\\
\mathcal{E}&=\{E_0\times S\in \mathcal{A}_3[n]^{\mathrm{dec}}\mid S\sim E^2 \text{ for some elliptic curve } E\}.
\end{aligned}
\end{equation}
Here both $\mathcal{C}$ and $\mathcal{E}$ decompose as countable unions $\mathcal{C}=\cup_i\mathcal{C}_i,$ and $\mathcal{E}=\cup_i\mathcal{E}_i,$ where each $\mathcal{C}_i=\{E_0\}\times \mathcal{S}_i$ for $\mathcal{S}_i$ a connected Shimura surface in $A_2[n]$ parametrizing abelian surfaces isogenous to a product of elliptic curves and each $\mathcal{E}_i=\{E_0\}\times \mathcal{T}_i$ for $\mathcal{T}_i$ a connected Shimura curve in $A_2[n]$ parametrizing abelian surfaces isogenous to a square of an elliptic curve.

Recall that Poincar\'{e}'s reducibility theorem states that any abelian variety $A$ has a decomposition (up to isogeny) $A\sim \prod_{i=1}^k A_i^{n_i}$, where none of the $A_i$ are isogenous \cite[Theorem 5.3.5]{BL}. Note that although the $A_i$ are not unique in this decomposition because they are only uniquely determined after tensoring with $\Q$, the values $k$, $\dim A_i$, and $n_i$ are are uniquely determined. 
These $A_i$ are the \emph{simple factors} of $A$. Note that using the equivalence between complex abelian varieties up to isogeny and polarizable $\mathbb{Q}$-Hodge structures of type $\{(-1,0),(0,-1)\}$, an abelian variety is simple if and only if its associated $\mathbb{Q}$-Hodge structure $A_\mathbb{Q}$ is simple. 

\begin{lem}\label{simple lemma}
Let $P=E_0\times S$ be an element of $\mathcal{B}$ such that $S$ is simple. If $a\in Sp(6,\mathbb{Q})$ is such that $aP\in \mathcal{A}_3^{\mathrm{dec}}[n]$, then $aQ\in  \mathcal{A}_3^{\mathrm{dec}}[n]$ for any point $Q$ of the form $E'\times S$, for $E'$ an elliptic curve, or of the form $E_0\times T $, for $T$ an abelian surface which does not have the elliptic curve $E_0$ as a simple factor. 
\end{lem}
\begin{proof}
Since $aP$ lies in $\mathcal{A}_3^{\mathrm{dec}}[n]$, we may write $aP=E'\times S'$ for some elliptic curve $E'$ and some abelian surface $S'$. Moreover, we know that $a$ induces an isomorphism of $\mathbb{Q}$-Hodge structures $(E_0)_\mathbb{Q}\times S_\mathbb{Q}\cong (E')_\mathbb{Q}\times S'_\mathbb{Q}$. Since the category of polarizable $\mathbb{Q}$-Hodge structures is semisimple and, by assumption, we know that $S_\mathbb{Q}$ is simple, it follows that $a$ induces isomorphisms $(E_0)_\mathbb{Q}\cong (E')_\mathbb{Q}$ and $S_\mathbb{Q}\cong S'_\mathbb{Q}$. In particular, the isogeny of $P$ induced by $a$ decomposes as an isogeny of $E_0$ and an isogeny of $S$. 

Hence if $Q=E'\times S$ for some elliptic curve $E'$ or if if $Q=E_0\times T$ for an abelian surface $T$ which does not have $E_0$ as a simple factor, then since there are no homomorphisms between non-isogenous abelian varieties, the isogeny of $Q$ induced by $a$ also decomposes as a product of isogenies, one on the elliptic curve and one on the abelian surface. It follows that $aQ$ is also decomposable. 
\end{proof}

\begin{lem}\label{elliptic curve lemma}
Let $P=E_0\times S$ be an element of $\mathcal{C}_i$ such that $S\sim E_1\times E_2$ with no two of the $E_j$ being isogenous. If $a\in Sp(6,\mathbb{Q})$ is such that $aP\in \mathcal{A}_3^{\mathrm{dec}}[n]$, then $aQ\in  \mathcal{A}_3^{\mathrm{dec}}[n]$ for every other point $Q=E_0\times T \in \mathcal{C}_i$ such that $T\sim E_1'\times E_2'$ with no two of $E_0, E_1', E_2'$ being isogenous. 
\end{lem}

\begin{proof}
Let $Q\in  \mathcal{C}_i$ be a point satisfying the hypotheses of the Lemma. Since $aP$ lies in $\mathcal{A}_3^{\mathrm{dec}}[n]$, we may write $aP=E'\times S'$ for some elliptic curve $E'$ and some abelian surface $S'$. Moreover, we know that $a$ induces an isomorphism of $\mathbb{Q}$-Hodge structures $(E_0)_\mathbb{Q}\times (E_1)_\mathbb{Q}\times (E_2)_\mathbb{Q}\cong (E')_\mathbb{Q}\times S'_\mathbb{Q}$. Since the category of polarizable $\mathbb{Q}$-Hodge structures is semisimple, it follows that $a$ induces an isomorphism $(E')_\mathbb{Q}\cong E_j$ for some $j\in \{0,1,2\}$. If $(E')_\mathbb{Q}\cong E_0$, then, because no two of the $E_j$ are isogenous and thus there are no homomorphisms among the factors of $P$ and no homomorphisms amongst the factors of $Q$, the argument to show that $aQ$ is decomposable proceeds identically to that of the proof of Lemma \ref{simple lemma}. 

So without loss of generality suppose that $a$ instead induces an isomorphism $(E')_\mathbb{Q}\cong (E_2)_\mathbb{Q}$. Hence $E'\sim E_2$ and $S'\sim E_0\times E_1$. Let $a_P$ and $a_Q$ denote the isogenies of $P$ and $Q$ respectively induced by $a$. Now since $S\sim E_1\times E_2$ and moreover none of the $E_j$ are isogenous, we may factor the isogeny $a_P$ as a composition 
\begin{equation}\label{isogeny factor} E_0\times S\rightarrow E_0\times E_1\times E_2\rightarrow S'\times E'.\end{equation}

Now for any point $R$ of one of the Shimura surfaces $\mathcal{S}_j$, let $\tilde {R}$ denote a preimage of $R$ in the Hermitian symmetric domain $\mathcal{D}_2^3$ of $4$-dimensional $\mathbb{Q}$-Hodge structures of type $\{(-1,0),(0-1)\}$. Then the Shimura surface $\mathcal{S}_j$ lifts to the Hermitian symmetric domain $\mathcal{D}_1^3\times \mathcal{D}_1^3\subset \mathcal{D}_2^3$ passing through $\tilde {R}$. Moreover, every point of this $\mathcal{D}_1^3\times \mathcal{D}_1^3$ lies in the orbit of $R$ under the action of $SL(2,\mathbb{R})\times SL(2,\mathbb{R})$. 

For an element $\alpha \in SL(2,\mathbb{R})\times SL(2,\mathbb{R})$, denote by $\alpha R$  the image in $\mathcal{A}_3[n]$ of $\alpha\tilde{R}$. Thus we may write $T=\alpha S$ for some $\alpha\in SL(2,\mathbb{R})\times SL(2,\mathbb{R})$. Let us write $\alpha=(\alpha_1,\alpha_2)$ with respect to the two factors $SL(2,\mathbb{R})\times SL(2,\mathbb{R})$. Then the isogeny $E_0\times S\rightarrow E_0\times E_1\times E_2$ induces an isogeny 
\[E_0\times \alpha S \rightarrow E_0\times \alpha_1E_1\times \alpha_2E_2.\] 
So then using Equation \eqref{isogeny factor}, the isogeny $a_Q$ of $Q$ induced by the element $a\in Sp(6,\mathbb{Q})$ factors as 
\begin{equation}\label{isogeny factor 2}
E_0\times T\rightarrow E_0\times \alpha(E_1\times E_2)\rightarrow T'\times E''\end{equation}
for some abelian surface $T'\sim E_0\times \alpha_1E_1$ and some elliptic curve $E''\sim \alpha_2E_2.$ In particular, we have that $aQ$ is decomposable. 
\end{proof}


\section{The non-simple case}\label{non-simple section}
We are now ready to address the case of possible endomorphism algebras of the $\mathbb{Q}$-Hodge structure on a very general fiber of a genus $3$ Kodaira fibration. The strategy to show that there exist general complete intersection Kodaira fibrations with decomposable Hodge structure on the general fiber is to produce an element $a\in Sp(6,\Q)$ such that the Hecke translate $a\mathcal{B}$ of the set $\mathcal{B}$ defined in \eqref{bce} intersects the decomposable locus $\mathcal{A}_3[n]^{\mathrm{dec}}$ in codimension at least $2$. 

We will accomplish this by defining a proper subset $\Gamma\Gamma_0\subset Sp(6,\Q)$ such that every $a\in Sp(6,\Q)$ in the complement of $\Gamma\Gamma_0$ has the above property.

So, fix a point $P_0=E_0\times E_1\times E_2$ in $\mathcal{B}$ such that none of the $E_i$ are isogenous to each other. Note that for any elliptic curve $E$ the endomorphism algebra of $H_1(E,\Q)$ is either $\Q$ or an imaginary quadratic field. So because none of the $E_i$ are isogenous to each other, it follows from Lemma \ref{polarized isogeny lemma} that if $P\sim Q$ for some point $Q\in \mathcal{A}_3[n]$, then there exists an element $a\in Sp(6,\Q)$ such that $aP=Q$. 

Consider the following subset of $Sp(6,\Q)$:
\[\Gamma=\{a\in Sp(6,\mathbb{Q})\mid aP_0 \in \mathcal{A}_3^{\mathrm{dec}}[n]\}.\]
We now define the following three subsets of $\Gamma$
\begin{equation*}
\begin{aligned}
\Gamma_0&=\{a\in \Gamma \mid aP_0=E_0'\times S', \text{ where } E_0'\sim E_0 \text{ and } S'\sim E_1\times E_2\}\\
\Gamma_1&=\{a\in \Gamma \mid aP_0=E_1'\times S', \text{ where } E_1'\sim E_1 \text{ and } S'\sim E_0\times E_2\}\\
\Gamma_2&=\{a\in \Gamma \mid aP_0=E_2'\times S', \text{ where } E_2'\sim E_2 \text{ and } S'\sim E_0\times E_1\}.
\end{aligned}
\end{equation*}
Since by definition the elements of the sets $\Gamma_i$ all lie in $\Gamma$ and thus in particular they send $P_0$ into the decomposable locus, it follows that we have the decomposition 
\[\Gamma=\Gamma_0\cup \Gamma_1\cup \Gamma_2.\]
Additionally, note that for any $i\ne j$ we have
\begin{equation}\label{gamma intersection equation} \Gamma_i\cap \Gamma_j=\{a\in \Gamma \mid aP_0=E_0'\times E_1'\times E_2', \text{ where } E_0'\sim E_0, E_1'\sim E_1, \text{ and } E_2'\sim E_2\}.\end{equation}

\begin{lem}\label{Gamma_i lemma}
For $i\in \{0,1,2\}$, let $Q=E\times S$ be a point such that $E\sim E_i$ and no simple factor of $S$ is isogenous to $E_i$.  Let $a\in Sp(6,\Q)$. Then $a\in \Gamma_i$ if and only if $aQ=E'\times S'$, where $E'\sim E$ and $S'\sim S$. 
\end{lem}

\begin{proof}
Suppose  $a\in \Gamma_i$. Then we know $aP_0=E_i''\times S''$, where $E_i''\sim E_i$ and $S''\sim E_j\times E_k$, where $i,j,k$ are distinct. 
In particular, the isogeny of $P_0$ induced by $a$ decomposes as the product of an isogeny of $E_i$ times an isogeny of the surface $E_j\times E_k$. Since there are no homomorphisms between non-isogenous abelian varieties and the surface $S$ has no simple factors isogenous to $E_i$, it follows that the isogeny of $Q$  induced by $a$ also decomposes as the product of an isogeny of $E$ and an isogeny of $S$. Hence $aQ$ is of the form $E'\times S'$ where $E'\sim E$ and $S'\sim S$.

Conversely, suppose that $aQ$ is of the form $E'\times S'$ where $E'\sim E$ and $S'\sim S$. Then by the same argument as above, the isogeny of $Q$ induced by $a$ decomposes as the product of an isogeny of $E$ and an isogeny of $S$. Hence since $P_0=E_0\times E_1\times E_2$, where none of the elliptic curve factors are isogenous, it follows that the isogeny of $P_0$ induced by $a$ decomposes as an isogeny of $E_i$ times an isogeny of $E_j\times E_k$. Thus $a\in \Gamma_i$. 
\end{proof}

\begin{cor}\label{group corollary}
Each of the subsets $\Gamma_i\subset \Gamma$ is an algebraic subgroup of $Sp(6,\Q)$ isomorphic to $SL(2,\Q)\times Sp(4,\Q)$. Moreover for $i\ne j$ the subset $\Gamma_i\cap \Gamma_j\subset \Gamma$ is an algebraic subgroup of $Sp(6,\Q)$ isomorphic to $SL(2,\Q)^3$.
\end{cor}

\begin{proof}
Note that each $\Gamma_i$ is the preimage of an irreducible component of $\mathcal{A}_3^{\mathrm{dec}}[n]$ under the map $Sp(6,\Q)\rightarrow \mathcal{A}_3[n]$ given by $a\mapsto aP_0$. Hence $\Gamma_i$ is a Zariski-closed subset of $Sp(6,\Q)$. We now check that $\Gamma_i$ is a subgroup of $Sp(6,\Q)$. 

It is clear that $\Gamma_i$ contains $\mathrm{Id}$. Suppose $a\in \Gamma_i$ and consider the point $Q=aP_0$. Then $Q=E_i'\times S'$ where $E_i'\sim E_i$ and $S'\sim E_j\times E_k$ with $i,j,k$ distinct. Since $a^{-1}Q=P_0$, Lemma \ref{Gamma_i lemma} implies $a^{-1}\in \Gamma_i$. 

Similarly, suppose $a,b\in \Gamma_i$ and consider the element $ab\in Sp(6,\Q)$. Since $b\in \Gamma_i$ we know $bP_0=E_i'\times S'$, where $E_i'\sim E_i$ and $S'\sim E_j\times E_k$, with $i, j , k$ distinct. Then by Lemma \ref{Gamma_i lemma}, since $a\in \Gamma_i$, we have $a(bP_0)$ of the form $E_i''\times S''$, where $E_i''\sim E_i$ and $S''\sim S'$. Since $S'\sim E_j\times E_k$, it follows that $ab\in \Gamma_i$. Thus $\Gamma_i$ is indeed an algebraic subgroup of $Sp(6,\Q)$. 

It is clear that the subgroup $SL(2,\Q)\times Sp(4,\Q)$ of $Sp(6,\Q)$ embeds into the set $\Gamma_i$. Moreover, suppose $a \in \Gamma_i$. Then the isogeny of $P_0$ induced by $a$ can be written as the product of an isogeny of $E_i$ times an isogeny of 
$E_j\times E_k$ with $i,j,k$ distinct. Since none of the elliptic curves $E_0$, $E_1$, $E_2$ are isogenous, if follows from Lemma \ref{polarized isogeny lemma} that there is a corresponding element of $a_1\in SL(2,\Q)$ and an element $a_2\in Sp(4,\Q)$ such that $aP_0=a_1E_i\times a_2(E_j\times E_k)$. Hence $SL(2,\Q)\times Sp(4,\Q)$ surjects onto $\Gamma_i$. This establishes the isomorphism $\Gamma_i\cong SL(2,\Q)\times Sp(4,\Q)$

For $i\ne j$ the subset $\Gamma_i\cap \Gamma_j$ is the intersection of two Zariski closed subgroups of $Sp(6,\Q)$, so it too is an algebraic group. Moreover, it comes equipped with a natural embedding $SL(2,\Q)^3\hookrightarrow \Gamma_i\cap \Gamma_j$. As in the case of the group $\Gamma_i$, the characterization \eqref{gamma intersection equation} together with Lemma \ref{polarized isogeny lemma} yields that this embedding is a surjection, whence the isomorphism $\Gamma_i\cap \Gamma_j\cong SL(2,\Q)^3$. 
\end{proof}

Now consider the product $\Gamma\Gamma_0 \subset Sp(6,\Q)$. Note that since $\Gamma=\Gamma_0\cup \Gamma_1\cup \Gamma_2$, we have the decomposition
\[\Gamma\Gamma_0=(\Gamma_1\Gamma_0)\cup (\Gamma_2\Gamma_0).\]
It follows from Corollary \ref{group corollary} that both $\Gamma_1\Gamma_0$ and $\Gamma_2\Gamma_0$ have dimension $(3+10) + (3+10)-3(3)=17$. Since $Sp(6,\Q)$ is $21$-dimensional, it follows in particular that  $\Gamma\Gamma_0$ is a proper subset of $Sp(6,\Q)$.

\begin{lem}\label{GammaGamma_0 lemma}
Let $Q\in \mathcal{B}$ be such that $Q=E_0\times S$ with either $S$ simple or $S\sim E_1'\times E_2'$ such that no two of the elliptic curves $E_0$, $E_1'$, or $E_2'$ are isogenous. If $a\in Sp(6,\Q)$ is such that $aQ\in \mathcal{A}_3^{\mathrm{dec}}[n]$, then $a\in \Gamma\Gamma_0$. 
\end{lem}

\begin{proof}
If the abelian surface $S$ is simple, then by Lemma \ref{simple lemma}, we know $aP_0\in \mathcal{A}_3^{\mathrm{dec}}[n]$, hence $a\in \Gamma$. So in particular $a\in \Gamma\Gamma_0$. 

Now suppose that  $S\sim E_1'\times E_2'$ such that no two of the elliptic curves $E_0$, $E_1'$, or $E_2'$ are isogenous. The point $Q=E_0\times S$ must lie on one of the connected surfaces $\mathcal{C}_i\subset \mathcal{C}$. Moreover, there exists a point $R=E_0\times T$ on this surface $\mathcal{C}_i$ such that $T\sim E_1\times E_2$. By Lemma \ref{elliptic curve lemma}, we know $aR\in \mathcal{A}_3^{\mathrm{dec}}[n]$. 

Since $T\sim E_1\times E_2$, there exists an isogeny from the point $P_0$ to the point $R$. Moreover, since none of the $E_i$ are mutually isogenous, it follows from Lemma \ref{polarized isogeny lemma} that in fact there exists an element $\gamma\in Sp(6,\Q)$ such that  $\gamma P_0=R$. By construction, we know $\gamma\in \Gamma_0$. 

Since $a\gamma(P_0)\in \mathcal{A}_3^{\mathrm{dec}}[n]$, it follows that $a\gamma \in \Gamma$. We know by Lemma \ref{group corollary} that $\Gamma_0$ is a group, so in particular $\gamma^{-1}\in \Gamma_0$. It follows that $a\in \Gamma\Gamma_0$. 
\end{proof}

\begin{cor}\label{complement corollary}
If $a\in Sp(6,\Q)$ is in the complement of $\Gamma\Gamma_0$, then $a\mathcal{B}\cap \mathcal{A}_3^{\mathrm{dec}}[n]$ has codimension at least $2$ in $a\mathcal{B}$.
\end{cor}

\begin{proof}
By Lemma \ref{GammaGamma_0 lemma}, if $a\in Sp(6,\Q)$ is in the complement of $\Gamma\Gamma_0$, then the only points of $\mathcal{B}$ that can be sent into $\mathcal{A}_3^{\mathrm{dec}}[n]$ by $a$ are those of the form $Q=E_0\times S$, where either $S\sim E_0\times E$ for some elliptic curve $E$ or $S\sim E^2$. The locus of points of this form is thus $1$-dimensional inside the $3$-dimensional locus $\mathcal{B}$. 
\end{proof}

\begin{rem}
It is possible to define an analogous subset to $\Gamma\Gamma_0$ for the subset $\mathcal{C}$ defined in \eqref{bce}. However Lemma \ref{polarized isogeny lemma} does not apply to the points of $\mathcal{E}$ (also defined in \eqref{bce}), which forms a codimension $1$ subset of $\mathcal{C}$. In the absence of Lemma \ref{polarized isogeny lemma}, it is difficult to compute the dimension of this analogous subset in order to show it is properly contained in $Sp(6,\Q)$. Additionally, even if one could show proper containment, without Lemma \ref{polarized isogeny lemma}, it is difficult to show that an element in the complement sends all but countably many of the points in $\mathcal{E}$ out of the decomposable locus. 
\end{rem}

\begin{prop}\label{prop non-simple case} The possible endomorphism algebras of a $4$-dimensional $\Q$-Hodge structure $V_2$ arising as $H_1(F_b,\Q)=V_1\oplus V_2$, for $F_b$ a very general fiber of a genus $3$ Kodaira fibration, are:
\begin{enumerate}
\item\label{m1}  $\Q$
\item \label{m2}A totally real quadratic field
\end{enumerate}
Possibility \eqref{m1} can be realized from a general complete intersection Kodaira fibration, while \eqref{m2} is not known to occur.
\end{prop}

\begin{proof}
Let $f\colon S\rightarrow B$ be a genus $3$ Kodaira fibration such that for a very general fiber $F_b$ the $\Q$-Hodge structure $V=H_1(F_b,\Q)$ decomposes as $V=V_1\oplus V_2$, where $V_i$ has dimension $2i$ for $i=1,2$. Recall from Lemma \ref{1dim} that because $V_1$ is $2$-dimensional, the variation of $\Q$-Hodge structures $(p_1)_*(R_1f_*\mathbb{Q})$ induced by projection onto the $V_1$ factor of $H_1(F_b,\mathbb{Q})$ must be isotrivial. After a base change of the curve $B$, we can assume that the variation  $(p_1)_*(R_1f_*\mathbb{Q})$ is in fact constant. Moreover, by Corollary \ref{2dim}, the variation of $\Q$-Hodge structures $(p_2)_*(R_1f_*\mathbb{Q})$ induced by projection onto the $V_2$ factor of $H_1(F_b,\mathbb{Q})$ must be non-trivial. Hence $V_2$ is not of CM type and by Lemma \ref{1dim} must be simple as a Hodge structure.

Let $L$ be the endomorphism algebra of $V_2$ and let $F_0$ be its center. Recall from Section \ref{background} Albert's classification of possible endomorphism algebras of a $4$-dimensional simple polarizable $\Q$-Hodge structure of type $\{(-1,0),(0,-1)\}$. Write $4=m[L:\Q]$ and $q^2=[L:F_0]$. Hence either $q=1$ or $q=2$. If $q=1$, then $L$ is commutative, so $L$ if of Type I or IV in Albert's classification. If $L$ is of Type I, then, as discussed in Section \ref{background}, we have that $L$ is a totally real field and $[L:\Q]$ divides $2$, so $L=\Q$ or $L$ is a totally real quadratic field. If $L$ is of Type IV and $q=1$, then $L$ is a CM field of degree $2$ or $4$ over $\Q$. However, Shimura shows in \cite[Theorem 5]{shimura} that a $4$-dimensional simple polarizable $\Q$-Hodge structure of type $\{(-1,0),(0,-1)\}$ cannot have endomorphism algebra an imaginary quadratic field, which eliminates the first possibility. Moreover, since $V_2$ cannot be of CM type, the case of a CM field of degree $4$ over $\Q$ is also not possible. If $q=2$, then Albert's classification yields that $L$ is a quaternion algebra of either Type II or III. However, Shimura shows in \cite[Theorem 5]{shimura} that the Type III case also cannot occur. Therefore we are left with the following three possibilities for the endomorphism algebra $L$:
\begin{enumerate}
\item\label{n1} $\Q$
\item\label{n2} A totally real quadratic field
\item\label{n3} An indefinite quaternion algebra
\end{enumerate}

It follows from Lemma \ref{shimura curve} that possibility \eqref{n3} cannot occur as then the variation of $\Q$-Hodge structures $R_1f_*\mathbb{Q}$ would be isotrivial, so we eliminate this possibility from the list.

In order to show that possibility \eqref{n1} arises from a general complete intersection Kodaira fibration, fix an elliptic curve $E_0$ and consider the subset $\mathcal{B}$ defined in \eqref{bce}. Recall the proper subset $\Gamma\Gamma_0\subset Sp(6,\Q)$ defined in the earlier part of this section. Let $a\in Sp(6,\Q)$ lie in the complement of $\Gamma\Gamma_0$ in $Sp(6,\Q)$. Then by Corollary \ref{complement corollary}, the Hecke translate $a\mathcal{B}\cap \mathcal{A}_3^{\mathrm{dec}}[n]$ has codimension at least $2$ in $a\mathcal{B}$.

Moreover, we have that $\mathcal{B}$ is contained in the connected Shimura variety $\mathcal{A}_1[n]\times \mathcal{A}_2[n]$ and so $a\mathcal{B}$ is contained in $a(\mathcal{A}_1[n]\times \mathcal{A}_2[n])$. The Satake-Baily-Borel compactification $\mathcal{A}_1^*[n]\times \mathcal{A}_2[n]^*$, then induces compactifications of $\mathcal{B}$ and hence of $a\mathcal{B}$ so that this compactification $(a\mathcal{B})^*$ is isomorphic to $\mathcal{A}_2[n]^*$. Since $\mathcal{A}_2[n]^*\backslash \mathcal{A}_2[n]$ has codimension $2$ in $\mathcal{A}_2[n]^*$, the boundary $(a\mathcal{B})^*\backslash (a\mathcal{B})$ has codimension $2$ in $(a\mathcal{B})^*$.

Thus, recalling that $\mathcal{A}_3[n]=\mathcal{T}_3[n]$, we have shown that $a\mathcal{B}$ verifies the conditions of Proposition \ref{complete intersection kod fib}. So there indeed exists a general complete intersection curve corresponding to a general complete intersection Kodaira fibration with the property that the homology of a very general fiber $F_b$ decomposes as $H_1(F_b,\Q)=V_1\oplus V_2$, where $\End_\mathbb{Q}(V_2)=\mathbb{Q}$. 
 \end{proof}

\section{Connected Monodromy of Genus $3$ Kodaira Fibrations}\label{section main result}
We now use the results of the preceding sections to characterize the connected monodromy groups of genus $3$ Kodaira fibrations. Before stating the theorem, we introduce a bit of notation. 

For a field extension $K'/K$ and an algebraic group $G$ defined over $K'$ let $R_{K'/K}G$ denote the Weil restriction functor from $K'$ to $K$ applied to the group $G$. Moreover, if $V$ is a $K$-vector space such that $K'\subset \End_K(V)$, let $_{K'}V$ denote $V$ considered as an $K'$-vector space via its action by endomorphisms. 

Lastly, one of the cases of the theorem involves the situation that the endomorphism algebra $L$ of $V=H_1(F_b,\Q)$, for $F_b$ a very general fiber of a Kodaira fibration, is an imaginary quadratic field. If $M\cong M_3(L^{\mathrm{op}})$ is the centralizer of $L$ in $\End_\Q(V)$, let $^{-}$ denote the involution on $M$ induced by complex conjugation on $L$. Then define
\begin{align*}
U(M,^{-})&\coloneqq\{m\in M^*\mid \overline{m}=m^{-1}\}\\
SU(M,^{-})&\coloneqq \ker \left(\mathrm{Norm}_{M/\Q}\colon U(M,^{-})\rightarrow \mathbb{G}_{m,\Q}\right).
\end{align*}
The groups $U(M,^{-})$ and $SU(M,^{-})$ are isomorphic over $\overline{\Q}$ to the groups $U(3)$ and $SU(3)$ respectively (see for instance \cite{boi}).

Moreover, for the purpose of translating between this Theorem \ref{main theorem} and Theorem \ref{complex theorem} stated in the introduction, note that if $L$ is a totally real field with $[L:\mathbb{Q}]=3$, then over $\mathbb{C}$ the group $R_{L/\mathbb{Q}}SL(_LV)$ is isomorphic to $SL(2)^3$. Similarly, if $V_2$ is a $4$-dimensional $\mathbb{Q}$-vector space such that $L_2\subset \End_{\mathbb{Q}}V_2$, then over $\mathbb{C}$ the group $R_{L_2/\mathbb{Q}}SL(_{L_2}V_2)$ is isomorphic to $SL(2)^2$.

\begin{thm}\label{main theorem}
Suppose $f\colon S\rightarrow B$ is a genus $3$ Kodaira fibration with very general fiber $F_b$ and associated $\Q$-Hodge structure $V=H_1(F_b,\Q)$ having endomorphism algebra $L$. Then the only possible connected monodromy groups realized by such a fibration are described by the following:
\begin{enumerate}
\item If $V$ is simple:
\begin{enumerate}
\item \label {1a}The group $Sp(6)$ if $L=\Q$
\item \label{1b} The group $R_{L/\mathbb{Q}}SL(_LV)$ if $L$ is a totally real field with $[L:\mathbb{Q}]=3$
\item \label{1c} The group $SU(M,^{-})$, if $L$ is an imaginary quadratic field, where $M\cong M_2(L^{\mathrm{op}})$ is the centralizer of $L$ in $\End_\Q(V)$
\end{enumerate}
\item If $V$ has $\Q$-Hodge structure decomposition $V=V_1\oplus V_2$, where $\dim_\Q V_i=2i$ and $V_2$ has endomorphism algebra $L_2$, then the connected monodromy group acts trivially on $V_1$ and its action on $V_2$ is given by:
\begin{enumerate}
\item \label{2a} The group $Sp(4)$ if $L_2=\mathbb{Q}$
\item \label{2b}The group $R_{L_2/\mathbb{Q}}SL(_{L_2}V_2)$ if $L_2$ is a totally real quadratic field
\end{enumerate}
\end{enumerate}
Moreover, Cases \eqref{1a}, \eqref{1b}, and \eqref{2a} are all realized by general complete intersection Kodaira fibrations, while Cases  \eqref{1c} and \eqref{2b} are not known to occur.
\end{thm}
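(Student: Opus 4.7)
The plan is to combine the classification of endomorphism algebras from Propositions \ref{simple case} and \ref{prop non-simple case} with André's Theorem \ref{andrethm} and the Hodge-group calculations of Ribet \cite{ribet1}. The proof decomposes into three steps: compute $Hg(V)$ (and $Hg(V_2)$) for each allowed endomorphism algebra; use André's theorem together with nontriviality of the monodromy to pin down $T$ inside $[Hg(V),Hg(V)]$; and invoke the existence/non-existence statements for general complete intersection fibrations already contained in those propositions.

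For the simple case, Proposition \ref{simple case} leaves three candidates for the endomorphism algebra $L$ of $V$: the field $\Q$, a totally real cubic field, or an imaginary quadratic field. In each case the Hodge group is the $\Q$-algebraic subgroup of $Sp(V,\langle,\rangle)$ that commutes with the $L$-action; a direct computation, done by Ribet, gives $Hg(V)\cong Sp(6)$ when $L=\Q$, $Hg(V)\cong R_{L/\Q}SL(_L V)$ when $L$ is a totally real cubic field (with $V$ regarded as a rank-$2$ $L$-module carrying an $L$-valued Hermitian form), and $Hg(V)\cong SU(M,{}^-)$ when $L$ is imaginary quadratic. In each case $[Hg(V),Hg(V)]=Hg(V)$ is $\Q$-simple by Ribet's result, which applies because $\dim V=3$ is prime. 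André's theorem then yields $T\triangleleft [Hg(V),Hg(V)]$, and the non-isotriviality of $f$ forces $\overline{\rho}$ to have infinite image as explained in Section \ref{intro}, so $T$ is nontrivial; $\Q$-simplicity of the ambient group then forces $T=[Hg(V),Hg(V)]$, which gives \eqref{1a}, \eqref{1b}, and \eqref{1c}.

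For the non-simple case, Proposition \ref{prop non-simple case} reduces the analysis to a decomposition $V=V_1\oplus V_2$ with $V_2$ simple of dimension $4$ and endomorphism algebra $L_2$ equal to either $\Q$ or a totally real quadratic field. By Lemma \ref{1dim}, the variation $(p_1)_*(R^1 f_*\Q)$ is locally constant, so $T$ acts trivially on $V_1$ and is determined by its projection to $GL(V_2)$. Repeating the Hodge-group calculation on the simple $4$-dimensional Hodge structure $V_2$ gives $Hg(V_2)\cong Sp(4)$ when $L_2=\Q$ and $Hg(V_2)\cong R_{L_2/\Q}SL(_{L_2} V_2)$ when $L_2$ is totally real quadratic; in both cases the Hodge group equals its derived subgroup and is $\Q$-simple. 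Corollary \ref{2dim} ensures the monodromy on $V_2$ is nontrivial, so André's theorem once more forces equality, producing \eqref{2a} and \eqref{2b}.

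The realization statement for \eqref{1a}, \eqref{1b}, and \eqref{2a} is immediate from Propositions \ref{simple case} and \ref{prop non-simple case}, which construct general complete intersection Kodaira fibrations with endomorphism algebras $\Q$, a totally real cubic, and $\Q$ (as $L_2$), respectively; non-realization by a general complete intersection Kodaira fibration in cases \eqref{1c} and \eqref{2b} is likewise proved in those same propositions. The main technical content thus lives upstream, and the present proof is essentially bookkeeping. The subtlest step I anticipate is identifying $Hg(V)$ with the correct $\Q$-form $SU(M,{}^-)$ (not just with its $\C$-form $SU(3)$) in the imaginary-quadratic case, since the correct $\Q$-group must remember both the Hermitian structure induced by the polarization and the $L$-action through a careful tracking of the centralizer decomposition of $\End_\Q(V)$.
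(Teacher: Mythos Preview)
Your proposal is correct and takes essentially the same approach as the paper: classify the endomorphism algebras via Propositions \ref{simple case} and \ref{prop non-simple case}, compute the Hodge groups via Ribet, and apply Andr\'e's theorem together with $\Q$-simplicity of the derived subgroup and nontriviality of the monodromy to force $T=[Hg,Hg]$. One small slip to fix: in the imaginary quadratic case Ribet's calculation gives $Hg(V)=U(M,{}^-)$, not $SU(M,{}^-)$, so $[Hg(V),Hg(V)]=SU(M,{}^-)\neq Hg(V)$ there; your conclusion $T=SU(M,{}^-)$ is unaffected since $SU(M,{}^-)$ is still $\Q$-simple.
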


\begin{proof}
Suppose $f\colon S\rightarrow B$ is a Kodaira fibration with $V=H_1(F_b,\Q)$ for $F_b$ a very general fiber. Then it follows from Theorem \ref{andrethm}\cite[Theorem 1]{andre} that the connected monodromy group $T$ of the fibration $f$ is a normal subgroup of $[Hg(V),Hg(V)]$, where $Hg(V)$ denotes the Hodge group of $V$.
Moreover, for $p$ a prime, Ribet proves in \cite{ribet1} that the Hodge group of a $p$-dimensional simple abelian variety $A$ is completely determined by $\End_\Q(A)$ and is given by the connected component of the centralizer of $\End_\Q(A)$ in $Sp(W)$, where $W=H_1(A,\Q)$. Using the equivalence of categories discussed in Section \ref{background} between complex abelian varieties up to isogeny and polarizable $\Q$-Hodge structures of type $\{(-1,0), (0,-1)\}$ one obtains the same result for any simple $2p$-dimensional $\Q$-Hodge structure $W$ of type $\{(-1,0), (0,-1)\}$. Namely that $Hg(W)$ is equal to the connected component of the centralizer of the endomorphism algebra of $W$ in $Sp(W)$. 

Thus, if the Hodge structure $V$ is simple, then it follows from Proposition \ref{simple case} that the possibilities for the endomorphism algebra $L$ of $V$ are exactly those described in Case \eqref{1a}, \eqref{1b}, and  \eqref{1c}. 
By Ribet's result the corresponding Hodge groups are the group $Sp(6)$ if $L=\Q$, the group $R_{L/\mathbb{Q}}SL(_LV)$ if $L$ is a totally real field with $[L:\mathbb{Q}]=3$, and the group $U(M,^{-})$ if $L$ is an imaginary quadratic field, where $M\cong M_2(L^{\mathrm{op}})$ is the centralizer of $L$ in $\End_\Q V$. The corresponding derived subgroups $Sp(6)$, $R_{L/\mathbb{Q}}SL(_LV)$, and $SU(M,^{-})$ are all normal, so the result follows from Theorem \ref{andrethm} \cite[Theorem 1]{andre}.

If $V=V_1\oplus V_2$ is non-simple, where $\dim_\Q V_i=2i$ for $i=1,2$, it follows from Lemma \ref{1dim} that the variation of $\Q$-Hodge structures $(p_1)_*(R_1f_*\mathbb{Q})$ induced by projection onto the factor $V_i$ is trivial. The possibilities for the endomorphism algebra $L_2$ of $V_2$ are completely described by Proposition \ref{prop non-simple case}.  Moreover, because $(p_1)_*(R_1f_*\mathbb{Q})$ is isotrivial, Schmid's rigidity theorem \cite[Theorem 7.24]{schmid} implies the monodromy representation $\overline{\rho}\colon \pi_1(X')\rightarrow GL(V)$ acts trivially on $V_1$. Namely, the representation $\overline{\rho}$ is determined solely by its action on $V_2$, and hence, using Theorem \ref{andrethm}, the connected monodromy group $T$ of the fibration $f$ is a normal subgroup of the commutator subgroup of the Hodge group of $V_2$. 

Since $V_2$ is a simple $4$-dimensional polarizable $\Q$-Hodge structures of type $\{(-1,0), (0,-1)\}$, Ribet's results\cite{ribet1} once again give that the Hodge group of $V_2$ is the connected component of the centralizer of $L_2$ in $Sp(V_2)$. If $L_2=\Q$, we thus have $Hg(V_2)=Sp(4)$ and if $L_2$ is a totally real quadratic field, then $Hg(V_2)= R_{L_2/\mathbb{Q}}SL(_{L_2}V)$. Since both groups are normal, the second half of the result follows once again from Theorem \ref{andrethm}.

The final part of the theorem dealing with cases arising from genus $3$ general complete intersection Kodaira fibrations follows from the characterizations of the endomorphism algebras found in Propositions \ref{simple case} and \ref{prop non-simple case}.
\end{proof}

\textbf{Acknowlegements.} The author would like to thank Donu Arapura, Don Blasius, Sebasti\'{a}n Reyes Carocca, Matt Kerr, Chad Schoen, Domingo Toledo, Kang Zuo, and especially Burt Totaro for helpful discussions in the preparation of this article. The author also gratefully acknowledges the support of the National Science Foundation through awards DGE-1144087 and DMS-1645877.

\bibliography{Kodaira.bib}

\begin{thebibliography}{KMRT98}

\bibitem[ACGH13]{arbarello}
E.~Arbarello, M.~Cornalba, P.~Griffiths, and J.D. Harris.
\newblock {\em Geometry of Algebraic Curves: Volume I}.
\newblock Grundlehren der mathematischen Wissenschaften. Springer New York,
  2013.

\bibitem[Alb39]{albert}
A.~Adrian Albert.
\newblock {\em Structure of {A}lgebras}.
\newblock American Mathematical Society Colloquium Publications, vol. 24.
  American Mathematical Society, New York, 1939.

\bibitem[And92]{andre}
Yves Andr{{\'e}}.
\newblock Mumford-{T}ate groups of mixed {H}odge structures and the theorem of
  the fixed part.
\newblock {\em Compositio Math.}, 82(1):1--24, 1992.

\bibitem[Ara17]{arapura}
Donu Arapura.
\newblock Toward the structure of fibered fundamental groups of projective
  varieties.
\newblock {\em J. {\'E}c. {P}olytech. Math.}, 4:595--611, 2017.

\bibitem[BL13]{BL}
Christina Birkenhake and Herbert Lange.
\newblock {\em Complex abelian varieties}, volume 302.
\newblock Springer Science \& Business Media, 2013.

\bibitem[Bro82]{brown}
Kenneth~S. Brown.
\newblock {\em Cohomology of Groups}.
\newblock Number~87 in Graduate Texts in Mathematics. Springer-Verlag, 1982.

\bibitem[Cat17]{catanese}
Fabrizio Catanese.
\newblock Kodaira fibrations and beyond: methods for moduli theory.
\newblock {\em Jpn. J. Math.}, 12(2):91--174, 2017.

\bibitem[CHS57]{chs}
S.~S. Chern, F.~Hirzebruch, and J.-P. Serre.
\newblock On the index of a fibered manifold.
\newblock {\em Proc. Amer. Math. Soc.}, 8:587--596, 1957.

\bibitem[Del72]{deligne}
Pierre Deligne.
\newblock La conjecture de {W}eil pour les surfaces {$K3$}.
\newblock {\em Invent. Math.}, 15:206--226, 1972.

\bibitem[FM11]{FM}
Benson Farb and Dan Margalit.
\newblock {\em A Primer on Mapping Class Groups (PMS-49)}.
\newblock Princeton University Press, 2011.

\bibitem[GDH91]{gonzalez}
Gabino Gonz{{\'a}}lez~D{\'{\i}}ez and William~J. Harvey.
\newblock On complete curves in moduli space. {I}, {II}.
\newblock {\em Math. Proc. Cambridge Philos. Soc.}, 110(3):461--466, 467--472,
  1991.

\bibitem[GGK12]{domain}
Mark Green, Phillip~A Griffiths, and Matt Kerr.
\newblock {\em Mumford-Tate Groups and Domains: Their Geometry and Arithmetic
  (AM-183)}.
\newblock Princeton University Press, 2012.

\bibitem[Hol94]{holzapfel}
Rolf-Peter Holzapfel.
\newblock Hierarchies of endomorphism algebras of abelian varieties
  corresponding to {P}icard modular surfaces.
\newblock {\em Schriftenreihe Komplexe Mannigfaltigkeiten}, 90, 1994.
\newblock
  \url{https://www.math.hu-berlin.de/~zyska/Holzapfel/Articles/1994.pdf}.

\bibitem[Hol98]{holzapfel2}
Rolf-Peter Holzapfel.
\newblock {\em Ball and Surface Arithmetics}.
\newblock Vieweg, Braunschweig, 1998.

\bibitem[Kas68]{kas}
Arnold Kas.
\newblock On deformations of a certain type of irregular algebraic surface.
\newblock {\em Amer. J. Math.}, 90:789--804, 1968.

\bibitem[Kle74]{kleiman}
Steven~L. Kleiman.
\newblock The transversality of a general translate.
\newblock {\em Compositio Math.}, 28:287--297, 1974.

\bibitem[KMRT98]{boi}
Max-Albert Knus, Alexander Merkurjev, Markus Rost, and Jean-Pierre Tignol.
\newblock {\em The book of involutions}.
\newblock American Mathematical Society, 1998.

\bibitem[Kod67]{kodsur}
K.~Kodaira.
\newblock A certain type of irregular algebraic surfaces.
\newblock {\em J. Analyse Math.}, 19:207--215, 1967.

\bibitem[Kot99]{kotschick}
D.~Kotschick.
\newblock On regularly fibered complex surfaces.
\newblock In {\em Proceedings of the {K}irbyfest ({B}erkeley, {CA}, 1998)},
  volume~2 of {\em Geom. Topol. Monogr.}, pages 291--298. Geom. Topol. Publ.,
  Coventry, 1999.

\bibitem[LSYY17]{LSYY}
Kefeng Liu, Xiaofeng Sun, Xiaokui Yang, and Shing-Tung Yau.
\newblock Curvatures of moduli space of curves and applications.
\newblock {\em Asian J. Math.}, 21(5):841--854, 2017.

\bibitem[LZ16]{classperiod}
Radu Laza and Zheng Zhang.
\newblock Classical period domains.
\newblock In Matt Kerr and Gregory Pearlstein, editors, {\em Recent Advances in
  Hodge Theory}, London Mathematical Society Lecture Note Series. {C}ambridge
  {U}niv. {P}ress, {C}ambridge, 2016.

\bibitem[Mil03]{milne2}
J.S Milne.
\newblock Canonical models of {S}himura curves.
\newblock \url{http://www.jmilne.org/math/articles/2003a.pdf}, 2003.

\bibitem[Mil04]{milne}
J.S. Milne.
\newblock Introduction to {S}himura varieties.
\newblock \url{http://www.jmilne.org/math/xnotes/svi.pdf}, 2004.

\bibitem[Moo98]{moonenws}
Ben Moonen.
\newblock Linearity properties of {S}himura varieties. {I}.
\newblock {\em J. Algebraic Geom.}, 7(3):539--567, 1998.

\bibitem[Moo99]{moonenmt}
Ben Moonen.
\newblock Notes on {M}umford-{T}ate groups.
\newblock \url{http://www.math.ru.nl/~bmoonen/Lecturenotes/CEBnotesMT.pdf},
  1999.

\bibitem[Orr17]{orr}
Martin Orr.
\newblock On compatibility between isogenies and polarizations of abelian
  varieties.
\newblock {\em Int. J. Number Theory}, 13(3):673--704, 2017.

\bibitem[Rib83]{ribet1}
Kenneth~A. Ribet.
\newblock Hodge classes on certain types of abelian varieties.
\newblock {\em Amer. J. Math.}, 105(2):523--538, 1983.

\bibitem[Sch73]{schmid}
Wilfried Schmid.
\newblock Variation of {H}odge structure: the singularities of the period
  mapping.
\newblock {\em Invent. Math.}, 22:211--319, 1973.

\bibitem[Shi63a]{shimizu}
Hideo Shimizu.
\newblock On discontinuous groups operating on the product of the upper half
  planes.
\newblock {\em Ann. of Math. (2)}, 77:33--71, 1963.

\bibitem[Shi63b]{shimura}
Goro Shimura.
\newblock On analytic families of polarized abelian varieties and automorphic
  functions.
\newblock {\em Ann. of Math. (2)}, 78:149--192, 1963.

\bibitem[Shi71]{shimura2}
Goro Shimura.
\newblock {\em Introduction to the arithmetic theory of automorphic functions}.
\newblock Princeton University Press, 1971.

\bibitem[Zaa95]{zaal}
Chris Zaal.
\newblock Explicit complete curves in the moduli space of curves of genus
  three.
\newblock {\em Geom. Dedicata}, 56(2):185--196, 1995.

\end{thebibliography}
\bibliographystyle{alpha}

\end{document}